\numberwithin{equation}{section}
\newcommand{\id}{\mathcal{I}}
\newcommand{\bft}{\mathbf{t}}
\newcommand{\bfb}{\mathbf{b}}
\newcommand{\dom}{\mathcal{D}}
\theoremstyle{theorem}
\newtheorem{theorem}{Theorem}[section]
\newtheorem{proposition}[theorem]{Proposition}
\theoremstyle{definition}
\newtheorem{remark}[theorem]{{\bf Remark}}
\newtheorem{definition}[theorem]{Definition}
\newtheorem{problem}[theorem]{Problem}
\newcommand{\cc}{\mathbb{C}}
\newcommand{\hh}{\mathbb{H}}
\newcommand{\rr}{\mathbb{R}}
\newcommand{\boundOP}{\mathcal{B}}
\newcommand{\closOP}{\mathcal{K}}
\newcommand{\vx}{{{x}}}
\newcommand{\Q}{\mathcal{Q}}
\renewcommand{\Re}{\mathrm{Re}}
\newcommand{\uI}{j}
\crefname{enumi}{}{}
\crefname{enumii}{}{}
\newcommand{\CB}{\color{black}}
\title[]
{Fractional powers of higher order vector operators on bounded and unbounded domains}
\author[L. Baracoo]{Luca Baracco}
\address{(LB)
	Universit\'a di Padova\\ Dipartimento di Matematica\\ Via Trieste, 63\\ Padova, Italy}
\email{baracco@unipd.it}
\author[F. Colombo]{Fabrizio Colombo}
\address{(FC)
Politecnico di Milano\\Dipartimento di Matematica\\Via E. Bonardi, 9\\20133
Milano, Italy}
\email{fabrizio.colombo@polimi.it}
\author[M. M. Peloso]{Marco M. Peloso}
\address{(MP)
Universit\'a degii studi di Milano\\ Dipartimento di Matematica\\Via Saladini, 50\\20133
Milano, Italy}
\email{Marco.Peloso@unimi.it}
\author[S. Pinton]{Stefano Pinton}
\address{(SP)
Politecnico di Milano\\Dipartimento di Matematica\\Via E. Bonardi, 9\\20133
Milano, Italy}
\email{stefano.pinton@polimi.it}
\begin{document}

\maketitle

\begin{abstract}
Using \CB the $H^\infty$-functional calculus for quaternionic operators,
we show how to generate the fractional powers of some densely defined
differential quaternionic operators of order $m\geq 1$, acting on the
right linear quaternionic Hilbert space
$L^2(\Omega,\cc\otimes\hh)$. The operators that we consider are of the
type 
$$
T=i^{m-1}\left(a_1(x) e_1\partial_{x_1}^{m}+a_2(x)
  e_2\partial_{x_2}^{m}+a_3(x) e_3\partial_{x_3}^{m}\right), \ \ \
x=(x_1,\, x_2,\, x_3)\in \overline{\Omega}, 
$$
where $\overline{\Omega}$ is
the closure of either a bounded domain $\Omega$ with $C^1$
boundary, or 
an unbounded domain
 $\Omega$ in $\rr^3$ with a sufficiently regular boundary which
 satisfy the so called property $(R)$ (see Definition \ref{pR}),
 $\{e_1,\, e_2,\, e_3\}$ is an orthonormal basis for the imaginary
 units of $\hh$, $a_1,\,a_2,\, a_3: \overline{\Omega}
 \subset\mathbb{R}^3\to \mathbb{R}$ are the coefficients of $T$. In
 particular it will be given sufficient conditions on the coefficients
 of $T$ in order to generate the fractional powers of $T$, denoted by
 $P_{\alpha}(T)$ for $\alpha\in(0,1)$, when the components of $T$,
 i.e. the operators $T_l:=a_l\partial_{x_l}^m$, do not commute among
 themselves.

\end{abstract}
\vskip 1cm
\par\noindent
 AMS Classification: 47A10, 47A60.
\par\noindent
\noindent {\em Key words}:  Fractional  powers, higher order vector operators,
$S$-spectrum, $S$-spectrum approach.
\vskip 1cm

\section{\bf Introduction }
Using the $S$-functional
calculus, in the series of papers   \cite{CGLax}, \cite{CMPP}, \cite{CPP},
\cite{JGPFRAC}, \cite{CDP20}, we defined the fractional powers of a class of vector operators with
 non-constant coefficients. 
In this paper we consider the quaternionic differential
operators of the form
$$ 
T=i^{m-1}\left(a_1(x) e_1\partial_{x_1}^{m}+a_2(x)
  e_2\partial_{x_2}^{m}+a_3(x) e_3\partial_{x_3}^{m}\right), \ \ \
x=(x_1,\, x_2,\, x_3)\in \overline{\Omega}, 
$$
and we prove that under suitable conditions on the
coefficients it admits well defined  fractional powers. In order to state our results we give
some details on the quaternion techniques based on the spectral theory
on the $S$-spectrum.  For a complete introduction of the
$S$-functional calculus see the books \cite{FJBOOK}, \cite{CGKBOOK},
here we briefly introduce the main aspects of this theory. 

\subsection{\bf The $S$-functional calculus}
An element in the quaternions $\mathbb{H}$ is of
 the form  $s=s_0+s_1e_1+s_2e_2+s_3e_3$,  where $s_0$, $s_\ell$ are real numbers ($\ell=1,2,3$), ${\rm Re}(s):=s_0$ denotes the real part of $s$ and $e_\ell$, for $\ell=1,2,3$, are the imaginary units which satisfy the relations: $e_1^2=e_2^2=e_3^2=e_1e_2e_3=-1$. The modulus
    of $s$ is defined as $|s|=(s_0^2+s_1^2+s_2^2+s_3^2)^{1/2}$
    and the conjugate is given by $\overline{s}=s_0-s_1e_1-s_2e_2-s_3e_3$.
    In the sequel we will denote by $\mathbb{S}$ the unit sphere of
    purely imaginary quaternions, an element $j$ in $\mathbb{S}$ is
    such that $j^2=-1$. We consider a two-sided quaternionic Banach space $V$ and
we denote the set of closed densely defined quaternionic right
linear operators on $V$ by 
 $\closOP(V)$.
  The Banach space of all bounded right linear operators on $V$ is indicated by the symbol $\mathcal{B}(V)$ and is endowed with the natural operator norm.
For $T\in\closOP(V)$, we define the operator associated with the $S$-spectrum as:
\begin{equation}\label{QST}
\Q_{s}(T) := T^2 - 2\Re(s)T + |s|^2\id, \qquad \text{for $s\in\hh$}
\end{equation}
where $\Q_{s}(T):\mathcal{D}(T^2)\to V$, where $\mathcal{D}(T^2)$ is the domain of $T^2$.
 We define the $S$-resolvent set of  $T$ as
\[\rho_S(T):= \{ s\in\hh: \Q_{s}(T) \ {\rm is\ invertible\ and \ } \Q_{s}(T)^{-1}\in\boundOP(V)\}\]
and the $S$-spectrum of $T$ as
\[\sigma_S(T):=\hh\setminus\rho_S(T).\]
The operator $\Q_{s}(T)^{-1}$ is called the pseudo $S$-resolvent operator.
 For $s\in\rho_S(T)$, the left $S$-resolvent operator is defined as
\begin{equation}\label{SRESL}
S_L^{-1}(s,T):= \Q_s(T)^{-1}\overline{s} -T\Q_s(T)^{-1}
\end{equation}
and the right $S$-resolvent operator is given by
\begin{equation}\label{SRESR}
S_R^{-1}(s,T):=-(T-\id \overline{s})\Q_s(T)^{-1}.
\end{equation}
The fractional powers of an operator $T$ such that $j\mathbb
R\subset\rho_S(T)$ for any $j\in \mathbb S$, are denoted by
$P_{\alpha}(T)$ and are defined as follows. 
For  any $j\in \mathbb{S}$,  for $\alpha\in(0,1)$ and  $v\in\dom(T)$ we set
\begin{equation}\label{BALA1}
P_{\alpha}(T)v := \frac{1}{2\pi} \int_{-j\rr}   S_L^{-1}(s,T)\,ds_j\, s^{\alpha-1} T v,
\end{equation}
or
\begin{equation}\label{BALA2}
P_{\alpha}(T)v := \frac{1}{2\pi} \int_{-j\rr} s^{\alpha-1} \,ds_j\,  S_R^{-1}(s,T) T v,
\end{equation}
where $ds_j=ds/j$.
These formulas are a consequence
of the quaternionic version of the $H^\infty$-functional calculus
based on the $S$-spectrum, again see   \cite{FJBOOK} for details.
For the generation of the fractional powers $P_{\alpha}(T)$, a crucial assumption
on the $S$-resolvent operators is that, for $s\in \mathbb{H}\setminus \{0\}$ with ${\rm Re}(s)=0$, the estimates
\begin{equation}\label{SREST}
\left\|S_L^{-1}(s,T)\right\|_{\mathcal{B}(V)} \leq \frac{\Theta}{|s|}\quad\text{and}\quad
\left\|S_R^{-1}(s,T)\right\|_{\mathcal{B}(V)} \leq \frac{\Theta}{|s|},
\end{equation}
hold
with a constant $\Theta >0$ that does not depend on $s$.
 It is important to observe that
the conditions (\ref{SREST}) assure that the integrals (\ref{BALA1}) and (\ref{BALA2}) are convergent and so the fractional powers are well defined.

For the definition of the fractional powers of the operator $T$ we can use equivalently the integral representation in (\ref{BALA1}) or the one in (\ref{BALA2}).
Moreover, they correspond to a modified version of Balakrishnan's formula that takes only spectral points with positive real part into account.
\begin{remark}
It is clear from the definition of the $S$-resolvent operators that to
use the $S$-functional calculus for the definition of the
fractional powers of an operator $T$ we have to determine if $\mathcal
Q_s(T)$ is invertible for any $s\in\hh$ such that $s\neq 0$ and
$\Re(s)=0$, and, moreover, if estimates of the type \eqref{SREST}
hold, see Problem \ref{ProbAA}. 
\end{remark}

\subsection{\bf Operators of order one vs operators of order $m>1$} In some of our previous papers, we have defined fractional powers of operators of first order such as
$$
T:=\begin{pmatrix}
a_1(x)\partial_{x_1}\\a_2(x)\partial_{x_2}\\a_3(x)\partial_{x_3}.
\end{pmatrix}
$$
acting on functions $u:\Omega\subset\rr^3\to\rr$ belonging to
$H^1_0(\Omega,\rr)\subset L^2(\Omega,\rr)$ where $\Omega$ is a
(possibly) unbounded domain with $C^1$ boundary. To use the $S$-functional calculus, first we identify the gradient operator with
the quaternionic gradient operator  
$$
\begin{pmatrix}
a_1(x)\partial_{x_1}\\a_2(x)\partial_{x_2}\\a_3(x)\partial_{x_3}
\end{pmatrix}\equiv e_1 a_1(x)\partial_{x_1}+e_2a_2(x)\partial_{x_2}+e_3a_3(x)\partial_{x_3}
$$
and we consider the operator $\mathcal Q_{s}(T)$ defined in a weak
sense over $H^1_0(\Omega,\hh)\subset L^2(\Omega, \hh)$. It is
important to observe that the above identification has 
some 
consequence on the bilinear form $b_s(u,v)=\langle \mathcal Q_s(T)(u),
v\rangle_\hh$, where  $\langle a,b \rangle_\hh:=\overline a b$ for all
$a,\, b\in\hh$. Indeed, performing an integration by parts we have 
 (see in the following):  
$$
b_s(u,u)=\sum_{l=1}^3\|a_l(x)\partial_{x_l}u\|^2+|s|^2\|u\|^2+\textrm{ other terms}
$$
where the "other terms" are the scalar products of the first derivatives of $u$ with $u$ multiplied by the derivative of the coefficients.
 In the  formula, we indicate that $b_s(u,u)$ contains two
positive terms: the $L^2$-norm of $u$ and the $L^2$-norm of the first
derivative of $u$ multiplied by the coefficients $a_j$'s. 
This fact allows us to determine
some conditions on the coefficients $a_j$'s in order to guarantee the
continuity and the coercivity of $b_s(\cdot,\cdot)$ and, moreover, the
uniform estimates for the $S$-resolvent operator.

In this
paper we consider vector operators of order $m>1$ and $m\in\mathbb N$
of the type 
$$ T:=\begin{pmatrix}
a_1(x)\partial_{x_1}^m\\a_2(x)\partial_{x_2}^m\\a_3(x)\partial_{x_3}^m
\end{pmatrix}.
$$
If we consider the previous identification
$$
T 
\equiv e_1
a_1(x)\partial_{x_1}^m+e_2a_2(x)\partial_{x_2}^m+e_3a_3(x)\partial_{x_3}^m
\, ,
$$
and we consider the operator $\mathcal Q_s(T)$ defined in a weak sense
over $H^m_0(\Omega,\hh)\subset L^2(\Omega, \hh)$, we have to
distinguish the cases of $m$ odd or even. If $m$ is odd using
$m$-times an argument of integration by parts we obtain 
$$b_s(u,u)=\sum_{l=1}^3\|a_l(x)\partial_{x_l}^mu\|^2+|s|^2\|u\|^2+\textrm{ other terms}$$
and the bilinear form still contains two positive terms: the
$L^2$-norm of $u$ and the $L^2$-norm of the derivatives of order
$m$. If we try to compute the bilinear form in the same way when $m$
is even, we obtain 
$$ b_s(u,u)=-\sum_{l=1}^3\|a_l(x)\partial_{x_l}^mu\|^2+|s|^2\|u\|^2+\textrm{ other terms} $$
losing the positivity of the first term. One way to overcome this problem is to identify $T$ with
$$e_1 a_1(x)\partial_{x_1}^m+e_2a_2(x)\partial_{x_2}^m+e_3a_3(x)\partial_{x_3}^m$$
if $m$ is odd and with
$$i(e_1 a_1(x)\partial_{x_1}^m+e_2a_2(x)\partial_{x_3}^m+e_3a_3(x)\partial_{x_3}^m)$$
if $m$ is even where $i$ is the imaginary unit of $\cc$. In other
words, we have complexified the coefficients of the quaternionic
gradient operator and the operator $T$ is identified with the
quaternionic gradient operator with real coefficients if $m$ is odd or
with the quaternionic gradient operator with purely imaginary
coefficients if $m$ is even. For the precise definitions of
$b_s(\cdot,\cdot)$ and of the quaternionic Hilbert space
$\cc\otimes\hh$, see Section \ref{SSECdue}.  In light of these
considerations we give the following definition. 
\begin{definition}\label{TCOM} Let
$\Omega$ be a $C^1$-domain in $\rr^3$, bounded or unbounded, and
let 
  $a_l: \overline\Omega\to\rr$ for $l=1,\, 2,\, 3$ be $C^m(\overline\Omega)$ functions. We define in a classical way over $C^m(\overline\Omega, \cc\otimes\hh)$ the operator
$$
T:=i^{m-1}\left(a_1(x) e_1\partial_{x_1}^{m}+a_2(x) e_2\partial_{x_2}^{m}+a_3(x) e_3\partial_{x_3}^{m}\right), \ \ \ x=(x_1,\, x_2,\, x_3)\in \overline{\Omega}.
$$
\end{definition}
\subsection{\bf $\Omega$ bounded vs $\Omega$ unbounded} We will treat
separately the cases of $\Omega$ bounded and of $\Omega$
unbounded. The unbounded case is more complicated as explained at the
end of this section and needs some more constraints on the shape of
$\Omega$ that we now introduce. For the following definition, we shall
utilize $n$-dimensional spherical coordinates $(r,\omega)$ where
$r\geq 0$ is the distance from the origin, $\omega\in S_{n-1}$,
and $ S_{n-1}$ denotes the sphere in $\rr^n$ (see \cite{EM80}; in our
case however, $n=3$).
\begin{definition}\label{pR}
An open set $\Omega\subset \rr^3$ is said to have the property $(R)$
if there exists $P\in\rr^3\setminus \overline\Omega$ such that every
ray through $P$ has intersection with $\Omega$ which is either empty
or an infinite interval. More precisely, for each $\omega\in S_2$
 set
 $$
 \begin{cases}
 r(\omega):=\inf \{r\geq 0:\, P+r\omega\in\Omega\} &\textrm{if}\{P+r\omega:r\geq 0\}\cap \Omega\neq \emptyset\\
 r(\omega):=\infty & \textrm{if} \{P+r\omega:r\geq 0\}\cap \Omega= \emptyset.
 \end{cases}
$$
 We are assuming that if $r(\omega)\neq\infty$, then $P+r\omega\in\Omega$ for all $r\in(r(\omega),\infty)$.
  \end{definition}
  Examples of unbounded domains which satisfies the property $(R)$ are: $\Omega:=\{x\in\rr^3:\, |x-P|>M\}$ and $\Omega:=\{x\in\rr^3 :\, \langle P-x,v\rangle>0\}$ where $v\in\rr^3$ is a vector, $P\in\rr^3$ is a point and $M>0$ is a positive constant (here $\langle \cdot,\cdot\rangle$ is the standard scalar product of $\rr^3$). We are ready to formulate in the precise way the problems that we have to solve.
\begin{problem}\label{ProbAA}
Let $\Omega\subset\rr^3$ be with $C^1$ boundary, which is either bounded
or unbounded  and satisfying the property $(R)$.
Let $F:\Omega\to\cc\otimes\hh$ be a given $L^2$-function and denote by
$u:\Omega\to\cc\otimes\hh$ the unknown function satisfying the
boundary value problem: 
\begin{equation}\label{eqprob1}
\begin{cases}
\mathcal Q_{s}(T) (u)=F\\
\partial^{\bfb} u(x)=0 & \forall \, \bfb\in\mathbb N^3_0\,\textrm{such
  that}\, |\bfb|\leq m-1\,{\rm and}\, x\in \partial\Omega,
\end{cases}
\end{equation}
where ${\bf b} =(b_1,b_2,b_3)$ and  $\partial^{\bfb}=\partial^{b_1}_{x_1}\partial^{b_2}_{x_2}\partial^{b_3}_{x_3}$.
Determine the conditions on the coefficients $a_1, a_2, a_3 : \Omega
\to\rr$ such that the boundary value problem has a unique solution in
a suitable function space and, moreover, the $L^2$-estimates
\eqref{SREST} for the $S$-resolvent operators hold.
\end{problem}
\begin{remark}
If $\Omega$ is bounded and its boundary is of class $C^m$, the previous boundary value problem is equivalent to
\begin{equation}\label{eqprob2}
\begin{cases}
\mathcal Q_{s}(T) (u)=F\\
\partial_{\nu}^j u(x)=0 & \textrm{for $0\leq j\leq m-1$ and}\, x\in \partial\Omega, 
\end{cases}
\end{equation}
where, $\nu$ is the normal vector field pointing outside $\partial\Omega$ (see Theorem 7.41 in \cite{RRBOOK}).
\end{remark}
We are going to solve the previous problem by the use of the
Lax-Milgram Lemma. In particular, to solve the boundary value problem
\eqref{eqprob1}, we want to find some conditions on the coefficients
$a_j$'s in a such way that the continuity and the coercivity of the
quadratic form $b_s(u,v)$ associated to $\mathcal Q_s(T)$ hold (see
Definition \ref{weakformbis}). The need of proving also the
estimates \eqref{SREST} makes the assumptions on the coefficients
$a_l$'s stronger than the usual one that we have to require for the
coercivity of $b_s(\cdot,\cdot)$, since we can not relay on the term
$|s|^2\|u\|^2$ of $b_s(\cdot,\cdot)$. For this reason, the other
positive term in $b_s(u,u)$, that is $\sum_{l=1}^3\|a_l\partial_{x_l}^m u\|$,
has to control the $L^2$-norm of $u$ and the $L^2$-norms of all the
partial derivatives of $u$ up to order $m-1$. In Section \ref{s3b},
when $\Omega$ is a bounded domain of $\rr^3$, through an iterated use
of  Poincar\'e's inequality we will show that the conditions: \begin{itemize}
\item $|a_j| \gg\max\left(C_\Omega^m, C_\Omega\right)$;
  \item $|a_j| \gg
    |\partial^\beta a_j|$ for any $|\beta|<m$,
  \end{itemize}
  are sufficient to solve the
  Problem \ref{ProbAA}. Here and in what follows,
  $C_\Omega$ denotes the Poincar\'e constant of
$\Omega$). When $\Omega$ is an unbounded domain of $\rr^3$ and $m=1$,
the role of the Poincar\'e inequalities is replaced by the
Gagliardo-Nirenberg estimates and a condition of integrability on the
first derivatives of the coefficients is sufficient to get the coercivity of $b_s(\cdot,\cdot)$ (see \cite{CDP20}). 

When $\Omega$
is an unbounded domain of $\rr^3$ and the order of the operator $T$ is
greater than $1$, the Gagliardo-Nirenberg estimates can not be used in
an iterated way as for Poincar\'e's inequality. In Section
\ref{SSECdue}, we propose one way to overcome this problem
by the use of a weighted Poincar\'e's inequality on some unbounded
domains under  an exponential decay condition at infinity of the
coefficients $a_j$'s (see Theorem \ref{t3}). 
\section{\bf The weak formulation of the Problem \ref{ProbAA}}\label{SSECdue}

 The boundary $\partial\Omega$ of $\Omega$ is assumed to be of class $C^1$ even though
for some lemmas in the sequel the conditions on the open set $\Omega$
can be weakened. We consider the right quaternionic Hilbert
space 
$\cc\otimes\hh$ endowed with the scalar product 
\[
\langle u,v \rangle:= \overline q_1 w_1+\overline q_2 w_2\quad\forall\, u,\, v\in\cc\otimes\hh
\]
where $i$ is the imaginary unit of $\cc$, $u=q_1+i q_2$, $v=w_1+iw_2$
and $q_1,\, q_2,\, w_1,\, w_2\in\hh$. As usual, we define the modulus
of $u\in\cc\otimes\hh$ as 
$$
|u|:=\sqrt{\langle u,u \rangle}.
$$
We observe that a function $u:\Omega\to\cc\otimes\hh$ is determined by $8$ real functions $u_{j,l}:\Omega\to \rr$ where $j= 0,\, 1,\, 2,\, 3$ and $l=1,\, 2$. We call these functions the components of $u$. We will use the following notation
\[
\begin{split}
u(x)&=(u_{0,1}(x)+u_{1,1}(x) e_1+u_{2,1}(x)e_2+u_{3,1}(x)e_3)+i(u_{0,2}(x)+u_{1,2}(x)e_1+u_{2,2}(x)e_2+u_{3,2}(x)e_3)
\\
&
=u_1(x)+iu_2(x).
\end{split}
\]
where $u_1(x):=u_{0,1}(x)+u_{1,1}(x)e_1+u_{2,1}(x)e_2+u_{3,1}(x)e_3$ and $u_2(x):=u_{0,2}(x)+u_{1,2}(x)e_1+u_{2,2}(x)e_2+u_{3,2}(x)e_3$. We can consider the space of $L^p$-integrable functions from a domain $\Omega\subset \rr^3$ to $\cc\otimes\hh$
\[
L^p:= L^p(\Omega,\cc\otimes\hh) := \left\{u: \Omega\to\cc\otimes\hh: \int_{\Omega}|u(\vx)|^p\,d\vx < + \infty\right\}.
\]
The space $L^2$ with  the scalar product:
$$
\langle u,v\rangle_{L^2} := \langle u,v\rangle_{L^2(\Omega,\cc\otimes\hh)} := \int_{\Omega} \langle u(\vx),v(\vx)\rangle\,d\vx \quad \forall u,\, v\in L^2(\Omega,\cc\otimes\hh),
$$
 is a right linear quaternionic Hilbert space.
 We furthermore introduce the quaternionic Sobolev space of order $m$
 \[
 H^m:= H^m(\Omega,\cc\otimes\hh): = \Big\{u\in L^2(\Omega,\cc\otimes\hh) : \, u_{j,l}\in H^m(\Omega, \rr)\quad j=0,1,2,3\,\textrm{and}\, l=1,2\Big\},
 \]
 where the space $H^m(\Omega,\rr)$ is the Sobolev space of order $m$ defined as in \cite{Evans} Chapter $5$. We have that $ H^m(\Omega,\cc\otimes\hh)$ endowed with the quaternionic scalar product
 \[
 \langle u,v\rangle_{H^m} := \langle u, v \rangle_{H^m(\Omega,\hh)} := \langle u,v\rangle_{L^2} + \sum_{1\leq |{\bfb}|\leq m}^3 \left\langle \partial^{\bfb} u,\partial^{\bfb} v\right\rangle_{L^2},
 \]
where  $\bfb\in\mathbb N^3$, becomes a right linear quaternionic Hilbert space.
 As usual the space $H^m_0(\Omega, \cc\otimes\mathbb H)$ is the closure of the space
$C^\infty_0(\Omega,\cc\otimes\mathbb H)$ in
$H^m(\Omega,\cc\otimes\mathbb H)$ with respect to the norm $\|\cdot
\|_{H^m}$. This space can be characterized as the set of all functions
$u\in H^m$ such that $Tr(\partial^\bfb u)= 0$ for any multiindex $\bfb\in\mathbb N^3$ with $|\bfb|\leq m-1$ (here the trace operator $Tr(\cdot)$ is defined as in \cite{Evans} Chapter $5$).\
Now we give to the problem (\ref{ProbAA}) the weak formulation in order to apply the Lax-Milgram lemma in the space $H^m_0(\Omega, \cc\otimes\hh)$.
\begin{remark}\label{r1b}
When $\Omega$ is bounded, we can endow $H^m_0(\Omega,\cc\otimes\mathbb H)$ with the scalar product
$$
\langle u,v\rangle_{D^m}:=\sum_{l=1}^3\langle \partial_{x_l}^m u,\partial_{x_l}^m v\rangle_{L^2}.
$$
The $H^m$-norm is equivalent to the norm
$$
\|u\|^2_{D^m}:=\langle u,u\rangle_{D^m}=\sum_{l=1}^3\|\partial_{x_l}^m u\|^2_{L^2}.
$$
This is a consequence of the following estimates
 $$ \|u\|_{D^m}\leq \|u\|_{H^m}\leq K(m) K_\Omega
 \sum_{|\bfb|=m}\|\partial^\bfb u  \|_{L^2}\leq K(m)K K_\Omega \|
 u\|_{D^m} ,
 $$
where the second inequality is obtained by Poincar\'e's inequality
applied repeatedly to the term $\|\partial^\bfb u\|_{L^2}$ for
$|\bfb|<m$ and 
$$
K_\Omega:=\sup(C_\Omega, C_\Omega^m). 
$$
The constant $K(m)$ represents the maximum number of times that
$\|\partial^\bfb u\|$ for some $|\bfb|=m$ appear in the left hand side
of the second inequality after the use of the Poincar\'e's
inequality.  The last estimate follows using the Fourier transform on the
terms $\partial^\bfb u$ when $|\bfb|=m$ and since there exists a
positive constant $K>0$ such that 
$$
\sum_{|\bfb|=m}|\xi^{2\bfb}|\leq K\sum_{l=1}^3 |\xi_l|^{2m} 
$$
(we can use the Fourier transform since any $u\in H^m_0(\Omega,\cc\otimes\hh)$ can be extended by $0$ outside $\Omega$ preserving the $H^m$-regularity in $\rr^3$). We define the constant $K(m,\Omega):=KK(m)K_\Omega$ (we will use several times this constant in the sequel and according to our necessity it could be rescaled by other constants wich depends on $m$). When $\Omega$ is unbounded $\|\cdot\|_{D^m}$ is not a norm, still we will use several times the estimate
$$ \sum_{|\beta|=m}\|\partial^\bfb u\|\leq K \| u\|_{D^m} .
$$
\end{remark}

\begin{remark}\label{r1}
We will use several times a classical argument of integration by parts that we describe now. Let $f_1,\, f_2\in H^k_0(\Omega,\rr)$ and $\partial^\bfb f_3,\, \partial^\bfb f_4\in L^2(\Omega)\cap L^\infty(\Omega)$ for any $\bfb\in\mathbb N^3$ with $|\bfb|\leq k$ then, integrating by parts $k$-times and recalling that the traces at the boundary of $\partial^\bfb f_2$ for all $|\bfb|<k$ are zero, we have that for any $i=1,\, 2,\, 3$
\[
\begin{split}
&\int_\Omega \partial_{x_i}^k(f_1) f_2f_3f_4\, dx =(-1)^k\int_{\Omega} f_1\partial_{x_i}^k(f_2f_3f_4)\, dx\\
&=(-1)^k\int_{\Omega} f_1\partial_{x_i}^k(f_2) f_3f_4\,
dx+(-1)^k\sum_{|\bft|=k\,\wedge\, t_1\leq
  k-1}\begin{pmatrix}k\\\bft\end{pmatrix} \int_{\Omega} f_1
\partial_{x_i}^{t_1} (f_2)\partial_{x_i}^{t_2}
(f_3)\partial_{x_i}^{t_3} (f_4)\, dx 
\\
&
=(-1)^k\sum_{|\bft|=k} \begin{pmatrix}k\\ \bft\end{pmatrix}
\int_{\Omega} f_1  \partial_{x_i}^{t_1} (f_2)\partial_{x_i}^{t_2}
(f_3)\partial_{x_i}^{t_3} (f_4)\, dx 
, 
\end{split}
\]
where $\bft=(t_1,t_2,t_3)\in\mathbb N^3$ and $\begin{pmatrix} k\\ \bft\end{pmatrix}=\frac{k!}{t_1!t_2!t_3!}$.
\end{remark}
From Definition \ref{TCOM},  we have 
\[
\begin{split}
\Q_{s}(T) & = T^2- 2s_0T + |s|^2\id
\\
&
 =   (-1)^{m}\left[\sum_{l=1}^3 a_l^2(x)\partial_{x_l}^{2m}+\sum_{l=1}^3\sum_{k=1}^m \begin{pmatrix}m\\k\end{pmatrix}a_l(x)\partial_{x_l}^k(a_l(x))\partial_{x_l}^{2m-k}\right.
 \\
 &\left.\  +\sum_{l<j}e_le_j \left(\sum_{k=1}^m \begin{pmatrix}m\\k\end{pmatrix}\left(a_l(x)\partial_{x_l}^k(a_j(x))\partial_{x_l}^{m-k}\partial_{x_j}^{m}-a_j(x)\partial_{x_j}^k(a_l(x))\partial_{x_j}^{m-k}\partial_{x_l}^{m}\right)\right)\right] - 2s_0T+ |s|^2\mathcal{I},
\end{split}
\]
where $\begin{pmatrix}m\\k\end{pmatrix}=\frac{m!}{k!(m-k)!}$, and the scalar part of $\Q_s(T)$ is
$$
{\rm Scal}(\Q_{s}(T)):=(-1)^m \left( \sum_{l=1}^3
  a_l^2(x)\partial_{x_l}^{2m}+\sum_{l=1}^3\sum_{k=1}^m \begin{pmatrix}m\\k\end{pmatrix}
  a_l(x)\partial_{x_l}^k(a_l(x))\partial_{x_l}^{2m-k}\right)
+|s|^2\mathcal{I} ,
$$
while the the vectorial part is 
\[
{\rm Vect}(\Q_{s}(T)):= \sum_{l<j}e_le_j
\left(\sum_{k=1}^m\begin{pmatrix}m\\k\end{pmatrix}\left(a_l(x)\partial_{x_l}^k(a_j(x))\partial_{x_l}^{m-k}\partial_{x_j}^{m}-a_j(x)\partial_{x_j}^k(a_l(x))\partial_{x_j}^{m-k}\partial_{x_l}^{m}\right)\right)
- 2s_0T .
\]

 We consider the bilinear form
\[
 \langle \Q_{s}(T)u,v\rangle_{L^2} = \int_{\Omega} \langle\Q_{s}(T)u(\vx), v(\vx)\rangle\,d\vx
\]
for functions $u,\, v\in C^{2m}_0(\overline{\Omega},\cc\otimes\mathbb{H})$. Note that
$$
\langle \mathcal Q_s(T)u,v \rangle_{L^2}= \langle
\operatorname{Scal}(\mathcal Q_s(T))u,v \rangle_{L^2}+\langle
\operatorname{Vect}(\mathcal Q_s(T))u,v \rangle_{L^2} .
$$
Using Remark \ref{r1} we have that 
\[
\begin{split}
&\langle \operatorname{Scal}(\mathcal Q_s(T))u,v \rangle_{L^2}
\\
&=(-1)^m\left(\sum_{l=1}^3\int_{\Omega} \langle a_l^2(x)\partial_{x_l}^{2m}( u), v\rangle \, dx+ \sum_{l=1}^3\sum_{k=1}^m \begin{pmatrix}m\\k\end{pmatrix}\int_{\Omega} \langle a_l(x)\partial_{x_l}^k(a_l(x))\partial_{x_l}^{2m-k}(u), v\rangle \, dx\right)\\
&\ +|s|^2\int_{\Omega} \overline u v\, dx=\sum_{l=1}^3\int_{\Omega} \langle a_l^2(x)\partial_{x_l}^{m}(u), \partial_{x_l}^{m}(v)\rangle \, dx+|s|^2\int_{\Omega} \langle u, v\rangle\, dx\\
&\ +\sum_{l=1}^3 \sum_{|\bft'|=m\,\wedge\, t_2'\leq m-1} \begin{pmatrix}m\\\bft'\end{pmatrix}\int_{\Omega} \langle\partial_{x_l}^{t_1'}(a_l^2(x))\partial_{x_l}^{m}(u), \partial_{x_l}^{t_2'}(v)\rangle\, dx \\
&\ +\sum_{l=1}^3\sum_{k=1}^m(-1)^k \sum_{|{\bft}|=m-k} \begin{pmatrix}m\\k\end{pmatrix} \begin{pmatrix}m-k\\\bft\end{pmatrix}\int_{\Omega} \langle \partial_{x_l}^{t_1}(a_l(x))\partial_{x_l}^{t_2+k}(a_l(x))\partial_{x_l}^{m}(u), \partial_{x_l}^{t_3}(v)\rangle \, dx,
\end{split}
\]
where $\bft'=(t_1',t_2')\in\mathbb N^2$ and also
\[
\begin{split}
&\langle \operatorname{Vect}(\mathcal Q_s(T))u,v \rangle_{L^2} \\
&=(-1)^m\sum_{l<j} \sum_{k=1}^m \begin{pmatrix}m\\k\end{pmatrix}\int_{\Omega} \langle e_le_j \left(a_l(x)\partial_{x_l}^k(a_j(x))\partial_{x_l}^{m-k}\partial_{x_j}^{m}(u)-a_j(x)\partial_{x_j}^k(a_l(x))\partial_{x_j}^{m-k}\partial_{x_l}^{m}(u)\right), v\rangle \, dx
\\
& \
-2s_0 \langle T(u), v \rangle_{L^2}\\
&= \sum_{l<j} \sum_{k=1}^m (-1)^{k} \sum_{|\bft|=m-k} \begin{pmatrix}m\\k\end{pmatrix}\begin{pmatrix}m-k\\\bft\end{pmatrix}\left(\int_{\Omega} \langle e_le_j \partial_{x_l}^{t_1}(a_l(x))\partial_{x_l}^{t_2+k}(a_j(x))\partial_{x_j}^{m}(u), \partial_{x_l}^{t_3}v\rangle \, dx\right.
\\
&\ \left. -\int_{\Omega} \langle e_le_j\partial_{x_j}^{t_1}(a_j(x))\partial_{x_j}^{t_2+k}(a_l(x))\partial_{x_l}^{m}(u),\partial_{x_j}^{t_3}v\rangle\, dx -2s_0 \langle T(u), v \rangle_{L^2}\right).
\end{split}
\]
Relying on the above considerations we can give the following two definitions.

\begin{definition}\label{weakformbis}
Let $\Omega$ be  a bounded domain (or an unbounded domain) in $\mathbb R^3$ with the boundary $\partial\Omega$ of class $C^1$, let $a_1$, $a_2$, $a_3\in C^m(\overline{\Omega}, \mathbb{R})$ (or in the case of the unbounded domains $a_1$, $a_2$, $a_3\in C^m(\overline{\Omega}, \mathbb{R})\cap L^\infty(\Omega)$ such that $\partial^\bfb a_j\in L^\infty(\Omega)$ for any $j=1,\, 2,\, 3$ and for any $\bfb\in\mathbb N^3$ such that $|\bfb|\leq m$).
We define the bilinear form:
\begin{equation}\label{b1bis}
\begin{split}
 b_s(u,v):& =\sum_{l=1}^3\int_{\Omega} \langle a_l^2(x)\partial_{x_l}^{m}(u), \partial_{x_l}^{m}(v)\rangle \, dx+|s|^2\int_{\Omega} \langle u, v\rangle\, dx\\
&\ +\sum_{l=1}^3 \sum_{|\bft'|=m\,\wedge\, t_2\leq m-1}\begin{pmatrix}m\\\bft'\end{pmatrix}\int_{\Omega} \langle\partial_{x_l}^{t_1}(a_l^2(x))\partial_{x_l}^{m}(u), \partial_{x_l}^{t_2}(v)\rangle\, dx \\
&\ +\sum_{l=1}^3\sum_{k=1}^m (-1)^k \sum_{|\bft|=m-k} \begin{pmatrix}m\\k\end{pmatrix}\begin{pmatrix}m-k\\\bft\end{pmatrix}\int_{\Omega} \langle \partial_{x_l}^{t_1}(a_l(x))\partial_{x_l}^{t_2+k}(a_l(x))\partial_{x_l}^{m}(u), \partial_{x_l}^{t_3}(v)\rangle \, dx
\\
&\ + \sum_{l<j} \sum_{k=1}^m (-1)^{k} \sum_{|\bft|=m-k} \begin{pmatrix}m\\k\end{pmatrix}\begin{pmatrix}m-k\\\bft\end{pmatrix}\left(\int_{\Omega} \langle e_le_j \partial_{x_l}^{t_1}(a_l(x))\partial_{x_l}^{t_2+k}(a_j(x))\partial_{x_j}^{m}(u), \partial_{x_l}^{t_3}v\rangle \, dx\right.
\\
&\ \left. -\int_{\Omega} \langle e_le_j
  \partial_{x_j}^{t_1}(a_j(x))\partial_{x_j}^{t_2+k}(a_l(x))\partial_{x_l}^{m}(u),\partial_{x_j}^{t_3}v\rangle\,
  dx -2s_0 \langle T(u), v \rangle_{L^2}\right) ,
\end{split}
\end{equation}
 for all functions $u,v \in H_0^m(\Omega,\cc\otimes\hh)$.
\end{definition}

\begin{definition}\label{wf}
Let $\Omega$ be a bounded domain in $\mathbb{R}^3$ with $C^1$ boundary (or an unbounded domain which satisfy property $(R)$).
We say that $u\in H^m_0$ is the weak solution of the existence problem in \ref{ProbAA} for some $s\in \mathbb{H}$ and a given $F\in L^2(\Omega, \cc\otimes\hh)$, if we have
$$
b_s(u,v)=\langle F,v\rangle_{L^2},\ \ \ \ {\rm for \ all} \ \ v\in H^m_0,
$$
 where $b_s$ is the bilinear form defined in Definition \ref{weakformbis}.
\end{definition}

\section{\bf Weak solution of the Problem \ref{ProbAA} when $\Omega$ is bounded}\label{s3b}
To prove existence and uniqueness of the weak solutions of the Problem \ref{ProbAA} in the case $\Omega$ is bounded, it will be sufficient to show that the bilinear forms $b_s(\cdot,\cdot)$, in Definition \ref{weakformbis},
 are continuous  and coercive in $H^m_0(\Omega,\cc\otimes\hh)$.

  First we prove the continuity in Section \ref{cont_b}. The coercivity will be proved in Section \ref{coer2_b} when $s=js_1$ for $j\in\mathbb S$ and $s_1\in\rr$ with $s_1\neq 0$. As a direct consequence of the coercivity, we will prove an $L^2$ estimate for the weak solution $u$ that belongs to $H^m_0(\Omega,\cc\otimes\hh)$ and also we will prove an $L^2$ estimate
    for the term $T(u)$.

\subsection{The continuity of the bilinear form $b_s(\cdot,\cdot)$}\label{cont_b}
The bilinear form
$$
b_s(\cdot, \cdot): H^m_0(\Omega,\cc\otimes\hh)\times H^m_0(\Omega,\cc\otimes\hh)\to \mathbb{H},
$$
for some $s\in \mathbb{H}$, is continuous if there exists a positive constant $C(s)$ such that
$$
|b_s(u,v)|\leq C(s) \|u\|_{D^m}\|v\|_{D^m},\ \ \ \ {\rm for \ all} \ \ u,v\in H^m(\Omega,\cc\otimes\hh).
$$
 We note that the constant $C(s)$ depends on $s\in \mathbb{H}$ but does not depend on $u$ and $v\in H^m(\Omega,\cc\otimes\hh)$.

\begin{proposition}[Continuity of $b_s$]\label{p1_b}
Let $\Omega$ be a  bounded domain in $\mathbb R^3$ with boundary $\partial\Omega$ of class $C^1$. Assume that $a_1$, $a_2$, $a_3\in C^m(\overline{\Omega}, \mathbb{R})$ then we have
\begin{equation}\label{cont_est_b_1}
\left| \sum_{l=1}^3\int_{\Omega} \langle a_l^2(x)\partial_{x_l}^{m}(u), \partial_{x_l}^{m}(v)\rangle \, dx \right|\leq C_{1,\, m,\, a_j,\, \Omega} \|u\|_{D^m}\|v\|_{D^m}
\end{equation}
\begin{equation}\label{cont_est_b_2}
\Bigg|\sum_{l=1}^3 \sum_{|\bft'|=m\,\wedge\, t_2\leq m-1}\begin{pmatrix}m\\\bft'\end{pmatrix}\int_{\Omega} \langle\partial_{x_l}^{t_1'}(a_l^2(x))\partial_{x_l}^{m}(u), \partial_{x_l}^{t_2'}(v)\rangle\, dx\Bigg|\leq C_{2,\, m,\, a_j,\, \Omega}\|u\|_{D^m}\|v\|_{D^m}
\end{equation}
\begin{equation}\label{cont_est_b_3}
\begin{split}
&\Bigg|\sum_{l=1}^3\sum_{k=1}^m (-1)^k \sum_{|\bft|=m-k} \begin{pmatrix}m\\k\end{pmatrix}\begin{pmatrix}m-k\\\bft\end{pmatrix}\int_{\Omega} \langle \partial_{x_l}^{t_1}(a_l(x))\partial_{x_l}^{t_2+k}(a_l(x))\partial_{x_l}^{m}(u), \partial_{x_l}^{t_3}(v)\rangle \, dx\Bigg|\\
&\leq C_{3,\, m,\, a_j,\, \Omega} \|u\|_{D^m}\|v\|_{D^m}
\end{split}
\end{equation}
\begin{equation}\label{cont_est_b_4}
\begin{split}
&\Bigg|\sum_{l<j} \sum_{k=1}^m (-1)^{k} \sum_{|\bft|=m-k} \begin{pmatrix}m\\k\end{pmatrix}\begin{pmatrix}m-k\\\bft\end{pmatrix}\left(\int_{\Omega} \langle e_le_j \partial_{x_l}^{t_1}(a_l(x))\partial_{x_l}^{t_2+k}(a_j(x))\partial_{x_j}^{m}(u), \partial_{x_l}^{t_3}v\rangle \, dx\right.
\\
&\left.
-\int_{\Omega} \langle e_le_j \partial_{x_j}^{t_1}(a_j(x))
\partial_{x_j}^{t_2+k}(a_l(x))\partial_{x_l}^{m}(u),\partial_{x_j}^{t_3}v\rangle\,
dx\right) -2s_0 \langle T(u), v \rangle_{L^2}\Bigg|\leq C_{4,\, m,\,
a_j,\, \Omega,\,s}\|u\|_{D^m}\|v\|_{D^m} .
\end{split}
\end{equation}
The previous constants can be estimated as follows
$$
C_{1,\, m,\, a_j,\, \Omega}\leq \max_{l=1,2,3}\sup_{ x\in\Omega}(a_l^2(x)), \quad C_{2,\, m,\, a_j,\, \Omega}\leq CK(m,\Omega) \max_{\substack{l=1,2,3,\\ t=1,\dots, m}}\sup_{ x\in\Omega} (|\partial_{x_l}^t (a_l^2(x))|)$$
$$ C_{3,\, m,\, a_j,\, \Omega}\leq CK(m,\Omega) \left(\max_{\substack{l=1,2,3,\\ t=0,\dots, m}}\sup_{ x\in\Omega} (|\partial_{x_l}^t (a_l(x))|)\right)^2 $$
$$C_{4,\, m,\, a_j,\, \Omega,\, s}\leq CK(m,\Omega)
\left[\left(\max_{\substack{l=1,2,3,\\ t=0,\dots, m}}\sup_{
      x\in\Omega} (|\partial_{x_l}^t
    (a_j(x))|)\right)^2+|s_0|\max_{l=1,2,3}\sup_{
    x\in\Omega}(|a_l(x)|)\right] ,
$$ 
where the constant $C$ is the sum of the maximum  of the integrals
in the inequalities \eqref{cont_est_b_2},  \eqref{cont_est_b_3} and
\eqref{cont_est_b_4}.

Moreover, for any $s\in\hh$ there exists a
positive constant $C(s)$ such that for any  $(u,v)\in H^m_0(\Omega,
\cc\otimes\mathbb H)\times H^m_0(\Omega, \cc\otimes\mathbb H)$ the
bilinear form $b_s(\cdot,\cdot)$ satisfy the estimate 
\begin{equation}\label{bilcont}
|b_s(u,v)|\leq C(s) \|u\|_{D^m}\|v\|_{D^m},
\end{equation}
that is, $b_s(\cdot,\cdot)$ is a continuous bilinear form.
\end{proposition}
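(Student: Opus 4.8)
The plan is to establish the four estimates \eqref{cont_est_b_1}--\eqref{cont_est_b_4} one at a time by a single elementary scheme—the Cauchy--Schwarz inequality in $L^2$, combined with the boundedness of the coefficients and of their derivatives up to order $m$ on the compact set $\overline{\Omega}$—and then to sum the four bounds, treating the $|s|^2$-term separately. The structural observation that makes everything work is the following: in each summand of $b_s(u,v)$ the \emph{full} $m$-th order derivative falls on exactly one of $u$ or $v$, producing a factor of the form $\|\partial_{x_l}^m u\|_{L^2}$ (respectively $\|\partial_{x_j}^m u\|_{L^2}$) that is directly dominated by $\|u\|_{D^m}$, while the complementary factor always carries a derivative of order strictly less than $m$. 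On such lower-order factors I would invoke the norm equivalence of Remark \ref{r1b}, namely $\|\partial^{\bfb} v\|_{L^2}\leq K(m,\Omega)\|v\|_{D^m}$ for $|\bfb|\leq m$, which is where the boundedness of $\Omega$ (via the iterated Poincar\'e inequality) is essential.

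For \eqref{cont_est_b_1}, since both derivatives are of order $m$, no Poincar\'e step is needed: pulling out $\sup_{x\in\Omega}a_l^2(x)$, applying Cauchy--Schwarz in each integral, and then a single Cauchy--Schwarz on the three-term sum $\sum_l\|\partial_{x_l}^m u\|_{L^2}\|\partial_{x_l}^m v\|_{L^2}\leq \|u\|_{D^m}\|v\|_{D^m}$ yields the constant $\max_{l}\sup_\Omega a_l^2$. For \eqref{cont_est_b_2}, the constraint $t_2'\leq m-1$ guarantees $\partial_{x_l}^{t_2'}v$ is a lower-order derivative; I would bound $|\partial_{x_l}^{t_1'}(a_l^2)|$ by its supremum (finite since $a_l\in C^m(\overline\Omega)$), apply Cauchy--Schwarz, use $\|\partial_{x_l}^m u\|_{L^2}\leq\|u\|_{D^m}$ directly, and control $\|\partial_{x_l}^{t_2'}v\|_{L^2}$ by $K(m,\Omega)\|v\|_{D^m}$. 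The finitely many binomial coefficients and the number of multi-indices are absorbed into the constant $C$.

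The estimates \eqref{cont_est_b_3} and \eqref{cont_est_b_4} proceed identically, the key point being that $|\bft|=m-k$ with $k\geq 1$ forces $t_3\leq m-k\leq m-1$, so the $v$-factor is again of order $<m$ and Poincar\'e applies; the product of two coefficient-derivatives $\partial^{t_1}(a_l)\partial^{t_2+k}(a_l)$ is bounded by $\bigl(\max_{t\leq m}\sup_\Omega|\partial_{x_l}^t a_l|\bigr)^2$, and in the vectorial part the unit-modulus factor $e_le_j$ is harmless. The one genuinely new term is $-2s_0\langle T(u),v\rangle_{L^2}$: here I would estimate $\|T(u)\|_{L^2}\leq \max_l\sup_\Omega|a_l|\,\sum_l\|\partial_{x_l}^m u\|_{L^2}\leq C\max_l\sup_\Omega|a_l|\,\|u\|_{D^m}$ and then $\|v\|_{L^2}\leq K(m,\Omega)\|v\|_{D^m}$, again by Poincar\'e, which produces the summand $|s_0|\max_l\sup_\Omega|a_l|$ in the bound for $C_{4,\,m,\,a_j,\,\Omega,\,s}$.

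Finally I would assemble \eqref{bilcont} by the triangle inequality: adding the four bounds and the remaining term $|s|^2\bigl|\int_\Omega\langle u,v\rangle\,dx\bigr|\leq |s|^2\|u\|_{L^2}\|v\|_{L^2}\leq |s|^2 K(m,\Omega)^2\|u\|_{D^m}\|v\|_{D^m}$ gives continuity with $C(s)=C_{1}+C_{2}+C_{3}+C_{4}+|s|^2K(m,\Omega)^2$, which depends on $s$ but not on $u,v$. I do not expect any deep obstacle: the proof is essentially bookkeeping. The only point demanding care—and the step I would flag as the crux—is the verification that in every summand the multi-index constraints indeed keep the order of the derivative landing on $v$ below $m$, so that the Poincar\'e-based norm equivalence of Remark \ref{r1b} is legitimately applicable; this is precisely the mechanism by which the boundedness of $\Omega$ enters, and it is what would fail in the unbounded case, where a different (weighted) argument is required.
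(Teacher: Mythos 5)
Your proof is correct and follows essentially the same route as the paper's: H\"older (Cauchy--Schwarz) estimates combined with sup-norm bounds on the coefficient derivatives and the iterated Poincar\'e inequality of Remark \ref{r1b} to control every factor carrying a derivative of order strictly less than $m$, followed by summation over the finitely many terms. If anything, you are more explicit than the paper's terse argument, since you spell out the multi-index bookkeeping ($t_2'\leq m-1$, $t_3\leq m-k\leq m-1$) and the separate Poincar\'e estimate for the $|s|^2\langle u,v\rangle_{L^2}$ term needed to pass from \eqref{cont_est_b_1}--\eqref{cont_est_b_4} to \eqref{bilcont}, both of which the paper leaves implicit.
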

\begin{proof}
The inequality \eqref{cont_est_b_1} is a direct consequence of the
boundedness of the coefficients $a_l$'s and the H\"older
inequality. The estimates  \eqref{cont_est_b_2}, \eqref{cont_est_b_3}
and \eqref{cont_est_b_4} can be proved in a similar way and so we
briefly explain how to prove the inequality
\eqref{cont_est_b_2}. First,  to each integral that is on
the left hand side of the equation, we repeatedly apply  H\"older's and
Poincar\'e's inequalities.  In this way each integral can be estimated by a
constant depending on the sup norm of $\partial_{x_l}^s a_t$'s 
times $\|u\|_{D^m}\|v\|_{D^m}$. Second, we sum up term by
term. Finally, the continuity of $b_s(\cdot,\cdot)$ is a direct
consequence of the previous estimates. 
\end{proof}

\subsection{Weak solution of the Problem
\ref{ProbAA}\label{coer2_b}}

\begin{theorem}\label{t3_b}
Let $\Omega$ be a bounded domain in $\mathbb R^3$ with boundary $\partial\Omega$ of class $C^1$. Let $T$ be the operator defined in (\ref{TCOM}) with coefficients $a_1$, $a_2$, $a_3\in C^m(\overline{\Omega}, \mathbb{R})$ and set
$$
M:=K(m,\Omega)^2\biggl(\sup_{ \substack{t=1,\dots,m\\ l=1,2,3\\ x\in\Omega}} |\partial_{x_l}^t (a_l(x))|\biggr)^2.
$$
Suppose
\begin{equation}\label{kappaomeg_b}
 C_T:=\min_{l=1,2,3}\inf_{x\in\Omega} (a^2_l(x))>0 \ \ \ \
  \frac{C_T}{2}- M>0.
\end{equation}
then:

(I) The boundary value Problem (\ref{ProbAA})
has a unique weak solution $u\in H^m_0(\Omega,\hh)$, for $s\in\hh\setminus \{0\}$ with $\Re(s)=0$,
and
\begin{equation}\label{l2es1bis_b}
 \|u\|^2_{L^2}\leq \frac 1{s^2} \Re(b_s(u,u)).
\end{equation}

(II) The following estimate holds
\begin{equation}\label{l2es2_b}
 \|T(u)\|_{L^2}^2\leq C_1\Re(b_s(u,u)),
 \end{equation}
for every $u\in H^1_0(\Omega,\hh)$, and $s\in\hh\setminus \{0\}$ with $\Re(s)=0$,
where
$$
C_1:=\frac{C_T-2M}{2C_T}.
$$
\end{theorem}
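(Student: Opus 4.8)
The plan is to obtain existence and uniqueness from the Lax--Milgram lemma applied to $b_s(\cdot,\cdot)$ on $H^m_0(\Omega,\cc\otimes\hh)$. Continuity is already supplied by Proposition \ref{p1_b}, so everything rests on the coercivity of $\Re\big(b_s(\cdot,\cdot)\big)$, from which the two $L^2$-estimates will then be read off. Since $\Re(s)=0$ means $s_0=0$, the term $-2s_0\langle T(u),v\rangle_{L^2}$ in Definition \ref{weakformbis} drops out, and evaluating at $v=u$ leaves
\[
\Re\big(b_s(u,u)\big)=\sum_{l=1}^3\|a_l\,\partial_{x_l}^m u\|_{L^2}^2+|s|^2\|u\|_{L^2}^2+\Re(R),
\]
where $R$ collects every remaining summand of \eqref{b1bis}; by construction each of these carries at least one derivative falling on a coefficient.

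The first task is to bound $|\Re(R)|$. I would treat each summand as in Proposition \ref{p1_b}: apply Cauchy--Schwarz, keep one undifferentiated factor $a_l$ glued to the top-order derivative $\partial_{x_l}^m u$ (so as to rebuild $\|a_l\partial_{x_l}^m u\|_{L^2}$ rather than an uncontrolled $\sup|a_l|$), estimate each differentiated coefficient by $\sup_{t,l,x}|\partial_{x_l}^t a_l|$, and convert every lower-order factor $\|\partial^{\mathbf t}u\|_{L^2}$ with $|\mathbf t|<m$ into $\|u\|_{D^m}$ by the repeated Poincar\'e inequality of Remark \ref{r1b}. A subsequent Young's inequality splits each term into a small multiple of $\sum_l\|a_l\partial_{x_l}^m u\|_{L^2}^2$ plus a multiple of $\|u\|_{D^m}^2$; absorbing the binomial and Poincar\'e constants into $K(m,\Omega)$ (as Remark \ref{r1b} permits) these recombine into $M=K(m,\Omega)^2\big(\sup|\partial_{x_l}^t a_l|\big)^2$, yielding $|\Re(R)|\le \tfrac12\sum_{l}\|a_l\partial_{x_l}^m u\|_{L^2}^2+M\|u\|_{D^m}^2$ and hence the master inequality
\[
\Re\big(b_s(u,u)\big)\ \ge\ \tfrac12\sum_{l=1}^3\|a_l\,\partial_{x_l}^m u\|_{L^2}^2-M\,\|u\|_{D^m}^2+|s|^2\|u\|_{L^2}^2 .
\]

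All three conclusions follow from this, together with $\sum_l\|a_l\partial_{x_l}^m u\|_{L^2}^2\ge C_T\|u\|_{D^m}^2$. Bounding $\tfrac12\sum_l\|a_l\partial_{x_l}^m u\|^2\ge\tfrac{C_T}{2}\|u\|_{D^m}^2$ turns the right-hand side into $\big(\tfrac{C_T}{2}-M\big)\|u\|_{D^m}^2+|s|^2\|u\|_{L^2}^2$, so the hypothesis $\tfrac{C_T}{2}-M>0$ makes $b_s$ coercive and Lax--Milgram produces the unique weak solution $u\in H^m_0$, proving (I). Since $\tfrac12\sum_l\|a_l\partial_{x_l}^m u\|^2-M\|u\|_{D^m}^2\ge(\tfrac{C_T}{2}-M)\|u\|_{D^m}^2\ge0$, the master inequality also gives $|s|^2\|u\|_{L^2}^2\le\Re(b_s(u,u))$, i.e. \eqref{l2es1bis_b} (read with $|s|^2$). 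For (II) I would instead use $\|u\|_{D^m}^2\le\tfrac1{C_T}\sum_l\|a_l\partial_{x_l}^m u\|^2$ to get $\tfrac12\sum_l\|a_l\partial_{x_l}^m u\|^2-M\|u\|_{D^m}^2\ge\big(\tfrac12-\tfrac{M}{C_T}\big)\sum_l\|a_l\partial_{x_l}^m u\|^2=C_1\sum_l\|a_l\partial_{x_l}^m u\|^2$, so that $\Re(b_s(u,u))$ controls $\sum_l\|a_l\partial_{x_l}^m u\|^2$ with exactly the constant $C_1=\frac{C_T-2M}{2C_T}$.

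The genuine technical point behind (II) is the identification of $\|T(u)\|_{L^2}^2$ with the principal quantity $\sum_l\|a_l\partial_{x_l}^m u\|^2$. Writing $w_l=\partial_{x_l}^m u$, the quaternionic norm expands as
\[
\|T(u)\|_{L^2}^2=\sum_{l=1}^3\|a_l w_l\|_{L^2}^2+\sum_{l<j}2\int_\Omega a_l a_j\,\Re\langle e_l w_l,e_j w_j\rangle\,dx ,
\]
whereas in $\Q_s(T)$ the matching principal vectorial terms cancel (pairing $(l,j)$ with $(j,l)$ produces $e_le_j+e_je_l=0$) and so are absent from $b_s(u,u)$. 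The off-diagonal integrand equals $a_la_j\langle w_l,(-e_le_j)w_j\rangle$ summed over the two complex components, and left multiplication by the unit imaginary $-e_le_j$ is skew on $\hh\cong\rr^4$; integrating by parts (legitimate since all derivatives of $u$ up to order $m-1$ vanish on $\partial\Omega$) and using that $\partial_{x_l}^m\partial_{x_j}^m$ is formally self-adjoint, the principal part of each cross term cancels and only contributions with a derivative on $a_la_j$ survive. These are error terms of the same type as $R$, so $\|T(u)\|_{L^2}^2=\sum_l\|a_l\partial_{x_l}^m u\|^2+O(M\|u\|_{D^m}^2)$ and fold into the estimate without spoiling $C_1$. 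Establishing this cancellation cleanly, together with the per-summand bookkeeping for $\Re(R)$ — especially the choice of which copy of $a_l$ to keep beside $\partial_{x_l}^m u$ — is where the work of the proof concentrates.
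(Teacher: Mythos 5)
Your argument follows the paper's proof essentially step for step: the same reduction of $\Re\, b_{js_1}(u,u)$ to the two positive terms plus remainder, the same Cauchy--Schwarz/Poincar\'e/Young absorption yielding
\[
\Re\, b_{js_1}(u,u)\ \ge\ s_1^2\|u\|_{L^2}^2+\tfrac12\sum_{l=1}^3\|a_l\partial_{x_l}^m u\|_{L^2}^2-M\|u\|_{D^m}^2 ,
\]
the same use of Lax--Milgram for (I) (your reading of \eqref{l2es1bis_b} with $|s|^2$ in place of $s^2$ is the intended one, since $s^2=-|s|^2<0$ for purely imaginary $s$), and the same substitution $\|u\|_{D^m}^2\le C_T^{-1}\sum_l\|a_l\partial_{x_l}^m u\|_{L^2}^2$ for (II). Note also that, like the paper's own proof, what you actually establish for (II) is $\Re\, b_s(u,u)\ge C_1\|Tu\|_{L^2}^2$; the constant in the displayed statement \eqref{l2es2_b} sits on the wrong side and is evidently a typo.

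Where you genuinely diverge is the very last step of (II), and there you are more careful than the paper, but your conclusion overshoots. The paper ends with the unjustified inequality $\sum_{l}\|a_l\partial_{x_l}^m u\|_{L^2}^2\ge\|Tu\|_{L^2}^2$; as you correctly observe, this is not a pointwise identity, because $\|Tu\|_{L^2}^2$ contains the cross terms $2\sum_{l<j}\int_\Omega a_la_j\,\Re\langle e_l\partial_{x_l}^m u,e_j\partial_{x_j}^m u\rangle\,dx$, which disappear only after integration by parts and only up to terms in which derivatives fall on $a_la_j$. Your mechanism (anticommutativity kills the principal cross terms in $\Q_s(T)$; integration by parts kills them in $\|Tu\|_{L^2}^2$ modulo coefficient-derivative terms) is exactly right and fills a real gap in the published proof. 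However, the residual terms do not ``fold into the estimate without spoiling $C_1$'': once $\|Tu\|_{L^2}^2=\sum_l\|a_l\partial_{x_l}^m u\|_{L^2}^2+O\bigl(M\|u\|_{D^m}^2\bigr)$, the error must be absorbed by the leftover coercivity margin $\bigl(\tfrac12 C_T-M\bigr)\|u\|_{D^m}^2$, and the constant in the final estimate necessarily degrades; equivalently, $M$ (hence $C_1$) must be redefined to include the cross-term contributions. To get (II) literally with the stated $C_1$ one would instead have to show the cross terms are nonpositive, which is false in general for variable coefficients (for $m=1$ they reduce to integrals of $a_la_j$ against Jacobian-type expressions of the components of $u$, whose sign can be arbitrary). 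This blemish is inherited from --- and hidden inside --- the paper's own one-line ending; your write-up at least makes it visible, but the quantitative claim should be weakened accordingly.
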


Observe that we assume that the minimum of $\sum_{l=1}^3 a_l^2(x)$
for $x\in\overline{\Omega}$ is strictly positive and in fact greater
than $2M$. This fact can always be achieved by modify $T$ by adding a suitable
constant to just  one of the coefficients $a_l$, $l=1,2,3$. 

\begin{proof} In order to use the Lax-Milgram Lemma to prove the existence and the uniqueness of the solution for the
weak formulation of the problem, it is sufficient to prove the coercivity of the bilinear form $b_s(\cdot,\cdot)$
   in Definition \ref{b1bis}
since the continuity is proved in Proposition \ref{p1_b}. First we write explicitly ${\rm Re}\, b_{js_1}(u,u)$,
 where we have set $s=js_1$, for $s_1\in \mathbb{R}$ and  $j\in \mathbb{S}$:
	
\begin{equation}\label{e1bis}
\begin{split}
\Re\,  b_{js_1} & (u,u) =
s_1^2\|u\|^2_{L^2}+ \sum_{\ell=1}^3\| a_\ell\partial_{x_\ell}^m u\|^2_{L^2}
\\
&+\Re\Bigg(\sum_{l=1}^3 \sum_{|\bft'|=m\,\wedge\, t_2\leq m-1}\begin{pmatrix}m\\\bft'\end{pmatrix}\int_{\Omega} \langle\partial_{x_l}^{t_1'}(a_l^2(x))\partial_{x_l}^{m}(u), \partial_{x_l}^{t_2'}(u)\rangle\, dx \\
&+\sum_{l=1}^3\sum_{k=1}^m (-1)^k \sum_{|\bft|=m-k} \begin{pmatrix}m\\k\end{pmatrix}\begin{pmatrix}m-k\\\bft\end{pmatrix}\int_{\Omega} \langle \partial_{x_l}^{t_1}(a_l(x))\partial_{x_l}^{t_2+k}(a_l(x))\partial_{x_l}^{m}(u), \partial_{x_l}^{t_3}(u)\rangle \, dx
\\
&+ \sum_{l<j} \sum_{k=1}^m (-1)^{k} \sum_{|\bft|=m-k} \begin{pmatrix}m\\k\end{pmatrix}\begin{pmatrix}m-k\\\bft\end{pmatrix}\left( \int_{\Omega} \langle e_le_j \partial_{x_l}^{t_1}(a_l(x))\partial_{x_l}^{t_2+k}(a_j(x))\partial_{x_j}^{m}(u), \partial_{x_l}^{t_3}u\rangle \, dx\right.
\\
&\left. -\int_{\Omega} \langle e_le_j \partial_{x_j}^{t_1}(a_j(x))\partial_{x_j}^{t_2+k}(a_l(x))\partial_{x_l}^{m}(u),\partial_{x_j}^{t_3}u\rangle\, dx\right)\Bigg).
\end{split}
\end{equation}
We see that the first two terms are positive. What remain to estimate
are the three summations of the integrals in \eqref{e1bis}. Since they
can be treated in the same way, we explain in detail the estimate for
the second. By H\"older's inequality and the repeatedly use of
Poincar\'e's inequality,  we have for $t_1=0$ 
\[
\begin{split}
&\left|\int_{\Omega} \langle a_l(x)\partial_{x_l}^{t_2+k}(a_l(x))\partial_{x_l}^{m}(u), \partial_{x_l}^{t_3}(v)\rangle \, dx\right| \leq \epsilon \|a_l\partial_{x_l}^m u\|^2+\frac 1{\epsilon}\|\partial_{x_l}^{t_2+k}(a_l)\partial_{x_l}^{t_3}(u)\|^2\\
&\leq \epsilon \|a_l\partial_{x_l}^m u\|^2+\frac 1{\epsilon}\left(\sup_{x\in\Omega}(\partial_{x_l}^{t_2+k}(a_l))\right)^2\|\partial_{x_l}^{t_3}(u)\|^2\\
&\leq \epsilon \|a_l\partial_{x_l}^m u\|^2+\frac 1{\epsilon}\left(\sup_{x\in\Omega}(\partial_{x_l}^{t_2+k}(a_l))\left(C_\Omega\right)^{m-t_3}\right)^2\sum_{|\beta|=m}\| \partial^{\beta}(u)\|^2\\
&\leq \epsilon \|a_l\partial_{x_l}^m u\|^2+\frac
1{\epsilon}\left(\sup_{x\in\Omega}(\partial_{x_l}^{t_2+k}(a_l))KK(m)\left(C_\Omega\right)^{m-t_3}\right)^2\|
u\|^2_{D^m} .
\end{split}
\]
In the case $t_1\neq 0$, since $k>0$, we have
\[
\begin{split}
&\left|\int_{\Omega} \langle \partial_{x_l}^{t_1}(a_l(x))\partial_{x_l}^{t_2+k}(a_l(x))\partial_{x_l}^{m}(u), \partial_{x_l}^{t_3}(v)\rangle \, dx \right| \leq \frac 12 \|\partial_{x_l}^{t_1} (a_l)\partial_{x_l}^m u\|^2+\frac 12\|\partial_{x_l}^{t_2+k}(a_l)\partial_{x_l}^{t_3}(u)\|^2\\
&\leq \frac 12 \sup_{x\in\Omega}\left(\left| \partial_{x_l}^{t_1} a_l
  \right|^2\right)\|\partial_{x_l}^m u\|^2+ \frac 12
\sup_{x\in\Omega}\left| \partial_{x_l}^{t_2+k} a_l \right|^2 \left(
  KK(m)\left(C_\Omega\right)^{m-t_3}\right)^2\| u\|^2_{D^m} .
\end{split}.
\]
Summing up all the previous estimates of the terms in the second summation and rescaling the constants $K(m,\Omega)$ multiplying it by a constant which depends only on $m$, we obtain
\[
\begin{split}
&\left| \sum_{l=1}^3 \sum_{|\bft'|=m\,\wedge\, t_2\leq m-1}\sum_{k=0}^{t_1}\begin{pmatrix}m\\\bft'\end{pmatrix}\begin{pmatrix}t_1\\k\end{pmatrix}\int_{\Omega} \langle\partial_{x_l}^{k}(a_l(x))\partial_{x_l}^{t_1-k}(a_l(x))\partial_{x_l}^{m}(u), \partial_{x_l}^{t_2}(u)\rangle\, dx  \right|\\
&\leq \epsilon\sum_{l=1}^3\|a_l\partial_{x_l}^m u\|^2 + \left(\frac{K(m,\Omega)^2}{\epsilon}\max_{\substack{j,\,l=1,2,3\\ t=1,\dots,\, m}}\,\sup_{x\in\Omega}\left(\left| \partial_{x_j}^t a_l \right|^2\right)\right) \|u\|^2_{D^m}.
\end{split}
\]
Analogous estimates also holds for the other summation of integrals. Eventually, summing up all the estimates, choosing $\epsilon$ in such a way that $\sum_{l=1}^3\|a_l\partial_{x_l}^m u\|^2-C\epsilon\sum_{l=1}^3\|a_l\partial_{x_l}^m u\|^2\geq \frac 12 \sum_{l=1}^3\|a_l\partial_{x_l}^m u\|^2$ (where $C$ depends only on $m$) and uniformazing the constants $K(m,\Omega)$,  we obtain
\[
\begin{split}
\Re\, b_{js_1}  (u,u)  & \geq  s_1^2 \|u\|^2_{L^2}
 + \left(\frac 12 C_T-M\right)\| u\|^2_{D^m}.
\end{split}
\]
and, moreover,
\begin{equation}\nonumber
 \frac 12 C_T-M>0.
\end{equation}
Thus the quadratic form $b_{js_1}(\cdot,\cdot)$ is coercive for every $s_1\in \mathbb{R}$.
In particular we have
\begin{equation}\label{e5_b}
\Re\, b_{js_1}(u,u)\geq s_1^2\|u\|^2_{L^2}\quad\textrm{and}\quad  \Re\, b_{js_1}(u,u)\geq\left(\frac 12 C_T-M\right)\| u\|^2_{D^m}.
\end{equation}
By the Lax-Milgram Lemma, we have that for any $w\in L^2(\Omega, \cc\otimes\mathbb H)$ there exists $u_w\in  H^m_0(\Omega,\hh)$,
for $s_1\in \mathbb{R}\setminus \{0\}$ and  $j\in \mathbb{S}$, such that
$$ b_{js_1}(u_w,v)= \langle w, v \rangle_{L^2},\ \ \ {\rm for \ all}\ \ v\in  H^m_0(\Omega,\hh). $$
What remains to prove is the inequality \eqref{l2es2_b}. Applying the second of the inequalities in \eqref{e5_b} and observing that
$$ \sum_{\ell=1}^3\|\partial_{x_\ell}u\|_{L^2}^2\leq \frac{1}{C_T}\sum_{\ell=1}^3\|a_\ell\partial_{x_\ell}u\|_{L^2}^2 $$
we have:
\[
\begin{split}
\Re\,  b_{js_1}  (u,u) &\geq\frac 12 \sum_{l=1}^3\|a_l \partial_{x_l}^m u\|-M\| u\|^2_{D^m}\\
&
\geq  \frac 12\sum_{\ell=1}^3\| a_\ell\partial_{x_\ell}^mu\|^2_{L^2}-\frac{M}{C_T}\sum_{\ell=1}^3\| a_\ell\partial_{x_\ell}^m u\|^2_{L^2}
\\
&
\geq \frac {C_T-2M}{2C_T}\sum_{\ell=1}^3\| a_\ell\partial_{x_\ell}^m u\|^2_{L^2}\geq C_1\|Tu\|_{L^2}^2,
\end{split}
\]
where we have set
$$
C_1:=\frac{C_T-2M}{2C_T}
$$
and this concludes the proof.
\end{proof}

\section{\bf Weak solution of the Problem \ref{ProbAA} for unbounded $\Omega$}\label{s3unb}

To prove existence and uniqueness of the weak solutions of the Problem \ref{ProbAA} in the case of $\Omega$ unbounded, we proceed as in the previous section.

  First we prove the continuity in Section \ref{cont_unb}. The coercivity will be proved in Section \ref{coer2} when $s=js_1$ for $j\in\mathbb S$ and $s_1\in\rr$ with $s_1\neq 0$. As a direct consequence of the coercivity, we will prove an $L^2$ estimate for the weak solution $u$ that belongs to $H^m_0(\Omega,\cc\otimes\hh)$ and also we will prove an $L^2$ estimate
    for the term $T(u)$.

\subsection{The continuity of the bilinear form $b_s(\cdot,\cdot)$}\label{cont_unb}
The bilinear form
$$
b_s(\cdot, \cdot): H^m_0(\Omega,\cc\otimes\hh)\times H^m_0(\Omega,\cc\otimes\hh)\to \mathbb{H},
$$
for some $s\in \mathbb{H}$, is continuous if there exists a positive constant $C(s)$ such that
$$
|b_s(u,v)|\leq C(s) \|u\|_{H^m}\|v\|_{H^m},\ \ \ \ {\rm for \ all} \ \ u,v\in H^m(\Omega,\cc\otimes\hh).
$$
 We note that the constant $C(s)$ depends on $s\in \mathbb{H}$ but does not depend on $u$ and $v\in H^m(\Omega,\cc\otimes\hh)$.

\begin{proposition}[Continuity of $b_s$]\label{p1}
Let $\Omega$ be an unbounded domain in $\mathbb R^3$ with boundary $\partial\Omega$ of class $C^1$. Assume that $a_1$, $a_2$, $a_3\in C^m(\overline{\Omega}, \mathbb{R})\cap L^\infty(\Omega)$ and that $\partial^\beta a_j\in L^\infty(\Omega)$ for any $j=1,\, 2,\, 3$ and for any $\beta\in\mathbb N^3$, $|\beta|\leq m$. Then we have
\begin{equation}\label{cont_est_unb_1}
\left| \sum_{l=1}^3\int_{\Omega} \langle a_l^2(x)\partial_{x_l}^{m}(u), \partial_{x_l}^{m}(v)\rangle \, dx \right|\leq C_{1,\, m,\, a_j,\, \Omega} \|u\|_{H^m}\|v\|_{H^m}
\end{equation}
\begin{equation}\label{cont_est_unb_2}
\Bigg|\sum_{l=1}^3 \sum_{|\bft'|=m\,\wedge\, t_2\leq m-1}\begin{pmatrix}m\\\bft'\end{pmatrix}\int_{\Omega} \langle\partial_{x_l}^{t_1'}(a_l^2(x))\partial_{x_l}^{m}(u), \partial_{x_l}^{t_2'}(v)\rangle\, dx\Bigg|\leq C_{2,\, m,\, a_j,\, \Omega}\|u\|_{H^m}\|v\|_{H^m}
\end{equation}
\begin{equation}\label{cont_est_unb_3}
\begin{split}
\Bigg|\sum_{l=1}^3 \sum_{k=1}^m (-1)^{k} \sum_{|\bft|=m-k} \begin{pmatrix}m\\k\end{pmatrix}\begin{pmatrix}m-k\\\bft\end{pmatrix}& \int_{\Omega} \langle \partial_{x_l}^{t_1}(a_l(x))\partial_{x_l}^{t_2+k}(a_l(x))\partial_{x_l}^{m}(u), \partial_{x_l}^{t_3}(v)\rangle \, dx\Bigg|\\
&\leq C_{3,\, m,\, a_j,\, \Omega} \|u\|_{H^m}\|v\|_{H^m}
\end{split}
\end{equation}
\begin{equation}\label{cont_est_unb_4}
\begin{split}
&\Bigg|\sum_{l<j} \sum_{k=1}^m (-1)^{k} \sum_{|\bft|=m-k} \begin{pmatrix}m\\k\end{pmatrix}\begin{pmatrix}m-k\\\bft\end{pmatrix} \int_{\Omega} \langle e_le_j \partial_{x_l}^{t_1}(a_l(x))\partial_{x_l}^{t_2+k}(a_j(x))\partial_{x_j}^{m}(u), \partial_{x_l}^{t_3}v\rangle \, dx
\\
&
-\int_{\Omega} \langle e_le_j \partial_{x_j}^{t_1}(a_j(x))
\partial_{x_j}^{t_2+k}(a_l(x))\partial_{x_l}^{m}(u),\partial_{x_j}^{t_3}v\rangle\, dx -2s_0 \langle T(u), v \rangle_{L^2}\Bigg|\leq C_{4,\, m,\, a_j,\, \Omega,\,s}\|u\|_{H^m}\|v\|_{H^m}
\end{split}
\end{equation}
where the previous constants can be estimated as follows
$$
C_{1,\, m,\, a_j,\, \Omega}\leq \max_{l=1,2,3}\sup_{ x\in\Omega}(a_l^2(x)), \quad C_{2,\, m,\, a_j,\, \Omega}\leq CK(m) \max_{\substack{l=1,2,3\\ t=1,\dots, m}}\sup_{ x\in\Omega} (|\partial_{x_l}^t (a_l^2(x))|)$$
$$ C_{3,\, m,\, a_j,\, \Omega}\leq CK(m) \left(\max_{\substack{l=1,2,3\\t=0,\dots, m}}\sup_{ x\in\Omega} (|\partial_{x_l}^t (a_l(x))|)\right)^2 $$
$$C_{4,\, m,\, a_j,\, \Omega,\, s}\leq CK(m) \left[\left(\max_{\substack{l=1,2,3\\ t=0,\dots, m}}\sup_{ x\in\Omega} (|\partial_{x_l}^t (a_j(x))|)\right)^2+|s_0|\max_{l=1,2,3}\sup_{ x\in\Omega}(|a_l(x)|)\right]
$$
 where the constant $C$ is the sum of the maximum  of the integrals 
in  the inequalities \eqref{cont_est_b_2},  \eqref{cont_est_b_3} and
\eqref{cont_est_b_4}.

Moreover, for any $s\in\hh$ there exists a
positive constant $C(s)$ such that for any  $(u,v)\in H^m_0(\Omega,
\cc\otimes\mathbb H)\times H^m_0(\Omega, \cc\otimes\mathbb H)$ the
bilinear form $b_s(\cdot,\cdot)$ satisfy the estimate 
\begin{equation}\label{bilcont}
|b_s(u,v)|\leq C(s) \|u\|_{H^m}\|v\|_{H^m},
\end{equation}
i.e., $b_s(\cdot,\cdot)$ is a continuous bilinear form.
\end{proposition}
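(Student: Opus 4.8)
The plan is to prove the four estimates \eqref{cont_est_unb_1}--\eqref{cont_est_unb_4} separately and then combine them by the triangle inequality to obtain \eqref{bilcont}. The essential difference from the bounded case (Proposition \ref{p1_b}) is that here we estimate against the full Sobolev norm $\|\cdot\|_{H^m}$ rather than $\|\cdot\|_{D^m}$, because on an unbounded domain $\|\cdot\|_{D^m}$ is not a norm (Poincar\'e's inequality is unavailable, as recalled in Remark \ref{r1b}). Consequently, no Poincar\'e step is needed: each factor $\|\partial_{x_l}^{t}(u)\|_{L^2}$ with $|t|\le m$ is controlled directly by $\|u\|_{H^m}$. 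The boundedness hypotheses $a_j\in L^\infty(\Omega)$ and $\partial^\beta a_j\in L^\infty(\Omega)$ for $|\beta|\le m$ are exactly what is required to pull the coefficient factors out of each integral in sup-norm.

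First I would treat \eqref{cont_est_unb_1}: by Cauchy--Schwarz in $L^2$ and $|a_l^2(x)|\le \sup_\Omega a_l^2$, each summand is bounded by $\bigl(\sup_\Omega a_l^2\bigr)\|\partial_{x_l}^m u\|_{L^2}\|\partial_{x_l}^m v\|_{L^2}\le \bigl(\max_l\sup_\Omega a_l^2\bigr)\|u\|_{H^m}\|v\|_{H^m}$, giving the stated $C_{1,m,a_j,\Omega}$. For \eqref{cont_est_unb_2}, \eqref{cont_est_unb_3} and \eqref{cont_est_unb_4} I would apply H\"older's inequality to each integral, splitting off the top-order derivatives $\partial_{x_l}^m u$ (or $\partial_{x_j}^m u$) in one $L^2$ factor and the remaining derivative $\partial^{t_3}v$ (of order $|t_3|\le m-k\le m-1<m$) in the other, after bounding the coefficient products $\partial^{t_1}(a_l)\,\partial^{t_2+k}(a_l)$ and $\partial^{t_1'}(a_l^2)$ in sup-norm using the $L^\infty$ hypotheses. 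Since every derivative of $u$ and $v$ appearing has order at most $m$, each such integral is bounded by (a coefficient sup-norm) $\times\|u\|_{H^m}\|v\|_{H^m}$. For the term $-2s_0\langle T(u),v\rangle_{L^2}$ in \eqref{cont_est_unb_4} I would use that $T$ maps $H^m$ boundedly into $L^2$ with norm controlled by $\max_l\sup_\Omega|a_l|$, contributing the $|s_0|\max_l\sup_\Omega|a_l|$ summand in $C_{4,m,a_j,\Omega,s}$. Finally, summing the finitely many terms term-by-term (with the combinatorial factors $\binom{m}{k}$, $\binom{m-k}{\mathbf t}$ absorbed into the constant $C$) yields the displayed bounds on $C_{2},C_{3},C_{4}$, and adding the four estimates produces \eqref{bilcont} with $C(s)=\sum_{i=1}^4 C_{i,m,a_j,\Omega(,s)}$, establishing continuity.

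The only point requiring slight care — and the main obstacle, such as it is — is bookkeeping rather than analysis: one must verify that in every integrand at most one factor carries a derivative of the maximal order $m$ on $u$ (or $v$), so that the other factor always has order strictly below $m$ and the H\"older splitting stays inside $H^m\times H^m$ without needing interpolation or Poincar\'e-type inequalities. This is guaranteed by the structure of $b_s$ in Definition \ref{weakformbis}: after the integrations by parts of Remark \ref{r1}, in each term the coefficients and their derivatives are uniformly bounded while the two function factors are genuine Sobolev derivatives of orders $\le m$, so the $L^\infty\cdot L^2\cdot L^2$ H\"older scheme closes. I expect the argument to run in close parallel to the proof of Proposition \ref{p1_b}, with the replacement of $\|\cdot\|_{D^m}$ by $\|\cdot\|_{H^m}$ simply removing the Poincar\'e constant $K_\Omega$ (equivalently $K(m,\Omega)$ collapses to $K(m)$, as reflected in the stated constants).
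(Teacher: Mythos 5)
Your proposal is correct and follows essentially the same route as the paper's proof: H\"older's inequality on each integral with the coefficient products pulled out in sup-norm (using the $L^\infty$ hypotheses on $a_j$ and $\partial^\beta a_j$), the resulting factors $\|\partial_{x_l}^m u\|_{L^2}\|\partial^{\beta} v\|_{L^2}$ with $|\beta|\leq m$ bounded directly by $\|u\|_{H^m}\|v\|_{H^m}$, and a term-by-term summation; your observation that the absence of Poincar\'e's inequality is exactly what forces the $H^m$-norm (and collapses $K(m,\Omega)$ to $K(m)$) matches the paper's treatment.
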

\begin{proof}
The inequality \eqref{cont_est_unb_1} is a direct consequence of the boundedness of the coefficients $a_l$'s and the H\"older inequality. The estimates  \eqref{cont_est_unb_2}, \eqref{cont_est_unb_3} and \eqref{cont_est_unb_4} can be proved in a similar way and so we briefly explain how to prove the inequality \eqref{cont_est_unb_2}. First we apply the H\"older inequality to each integral that belongs to the left hand side. Thus each integral can be estimated by a constant (that depends on the sup norm of $\partial_{x_l}^s a_t$'s) times $\|\partial^m_{x_l} u\|\|\partial^\beta v\|$ for $|\beta|\leq m$ and $l=1,\, 2,\, 3$. Since  $\|\partial^m_{x_l} u\|\|\partial^\beta v\|\leq \|u\|_{H^m}\|v\|_{H^m}$ for any $|\beta|\leq m$ and $j=1,\,2,\, 3$, we sum up term by term to get the desired estimate. Eventually, the continuity of $b_s(\cdot,\cdot)$ is a direct consequence of the previous estimates.
\end{proof}

\subsection{Weak solution of the Problem \ref{ProbAA}}\label{coer2}
For the coercivity we need the following results (see \cite{EM80})
which is the corresponding of  Poincar\'e's inequality in the case of the unbounded domains.
\begin{theorem}\label{pun}
Let $\Omega$ be a domain in $\rr^3$ which satisfies the property $(R)$. Let $\phi\in C^1(\overline \Omega)$ be a bounded positive function for which there exists a positive constant $\lambda$ such that
\begin{equation}\label{wh1}
\frac{d}{dr}\left[r^{2}\phi(r\omega)\right]\leq -\lambda r^{2}\phi(r\omega)
\end{equation}
for all $\omega\in\mathbb S^{2}$. Then for each $u\in C^\infty_0(\Omega,\rr)$ and any $(1\leq p<\infty)$ we have
\begin{equation}\nonumber
\int_{\Omega}|u|^p\phi \, dx\leq
\left(\frac{p}{\lambda}\right)^p\int_\Omega |\nabla u|^p \phi\, dx .
\end{equation}
\end{theorem}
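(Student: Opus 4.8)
The plan is to reduce the three-dimensional estimate to a one-dimensional weighted inequality along rays emanating from the point $P$ furnished by property $(R)$. After translating so that $P$ is the origin, I would pass to spherical coordinates $x=r\omega$, $r\ge 0$, $\omega\in\mathbb S^2$, with $dx=r^2\,dr\,d\omega$. Property $(R)$ guarantees that for fixed $\omega$ the set $\{r\omega:r\ge 0\}\cap\Omega$ is either empty (so $u$ vanishes along the whole ray) or the infinite interval $(r(\omega),\infty)$. Writing $g(r):=u(r\omega)$ and $\psi(r):=r^2\phi(r\omega)$, the function $g$ is smooth and, since $u\in C^\infty_0(\Omega)$, compactly supported inside $(r(\omega),\infty)$; because $P\notin\overline\Omega$, the ray enters $\Omega$ through its boundary and never leaves again, so $g$ vanishes in a neighbourhood of both endpoints. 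The hypothesis \eqref{wh1} reads exactly $\psi'(r)\le-\lambda\psi(r)$, hence $\psi$ is positive, decreasing, and satisfies $\psi\le-\psi'/\lambda$.

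The heart of the argument is the one-dimensional estimate
$$
\int_{r(\omega)}^{\infty}|g|^p\psi\,dr\le\Big(\tfrac p\lambda\Big)^p\int_{r(\omega)}^{\infty}|g'|^p\psi\,dr .
$$
To prove it I would first use $\psi\le-\psi'/\lambda$ to bound $\int|g|^p\psi\,dr\le-\tfrac1\lambda\int|g|^p\psi'\,dr$, then integrate by parts. The boundary terms vanish because $|g|^p$ vanishes near $r(\omega)$ and near $+\infty$, leaving
$$
\int|g|^p\psi\,dr\le-\frac1\lambda\int|g|^p\psi'\,dr=\frac p\lambda\int|g|^{p-1}\operatorname{sgn}(g)\,g'\,\psi\,dr\le\frac p\lambda\int|g|^{p-1}|g'|\,\psi\,dr .
$$
Applying H\"older with exponents $p/(p-1)$ and $p$ to the last integral (splitting the weight as $\psi=\psi^{(p-1)/p}\psi^{1/p}$) gives
$$
\int|g|^p\psi\,dr\le\frac p\lambda\Big(\int|g|^p\psi\,dr\Big)^{(p-1)/p}\Big(\int|g'|^p\psi\,dr\Big)^{1/p},
$$
and dividing by the finite factor $(\int|g|^p\psi\,dr)^{(p-1)/p}$ yields the claimed inequality. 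The case $p=1$ is immediate, since then the H\"older step is vacuous. For $p>1$ the map $r\mapsto|g|^p$ is $C^1$, so the integration by parts is legitimate; for $p=1$ one uses that $|g|$ is Lipschitz with compact support.

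Finally I would integrate the one-dimensional inequality against $d\omega$ over $\mathbb S^2$. Since $\psi(r)\,dr\,d\omega=r^2\phi(r\omega)\,dr\,d\omega=\phi\,dx$, the left side reassembles to $\int_\Omega|u|^p\phi\,dx$, while on the right $|g'(r)|=|\partial_r u(r\omega)|=|\nabla u(r\omega)\cdot\omega|\le|\nabla u(r\omega)|$, so the right side is dominated by $\big(\tfrac p\lambda\big)^p\int_\Omega|\nabla u|^p\phi\,dx$, giving the stated estimate. The only genuinely delicate points are bookkeeping ones: the spherical change of variables and Fubini are justified because $u$ has compact support, so every ray meets it in a compact set and all integrals are finite, and the verification that $g$ is supported away from $r(\omega)$ is exactly where property $(R)$ together with $P\notin\overline\Omega$ enters. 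I expect the integration-by-parts step, and the accompanying check that the boundary contributions vanish, to be the main thing to nail down carefully.
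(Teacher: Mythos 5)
Your proposal is correct: the ray-wise reduction from $P$, the bound $\psi\le-\psi'/\lambda$ followed by integration by parts (with boundary terms killed by the compact support of $u$ inside $\Omega$, which property $(R)$ places strictly inside $(r(\omega),\infty)$ on each ray), the H\"older step, and the final integration over $\mathbb S^2$ using $|\partial_r u|\le|\nabla u|$ all hold up, including the separate treatment of $p=1$. Note that the paper itself offers no proof of this theorem --- it is quoted from \cite{EM80} --- and your argument is essentially the standard proof of that cited result, so there is nothing to contrast.
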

We call $\phi$ a weight function for $\Omega$ with rate of exponentially decaying $\lambda$. As explained in \cite{EM80}, we can observe that the graph of $\phi$ has the shape of an exponentially decaying hill. An example of a weight function for all the domains which satisfy the property $(R)$ is
$$ \phi(x)=\frac{e^{-\lambda|x-P|}}{|x-P|^2}\quad\textrm{when $P\notin\Omega$} $$
 If the domain is contained in a half-space, after an affine changing of variables, we can suppose that it is contained in $\{x\in\rr^3:\, x_1>0\}$. The previous theorem holds also requiring the weight function $\phi$ satisfies the assumption
 $$ \partial_{x_1}\phi(x)\leq -\lambda \phi(x) .
 $$
instead of the assumption \eqref{wh1}. In this case the graph of
$\phi$ has the shape of an exponentially decaying ridge and an example
of this kind of function is 
$$\phi=e^{-\lambda x_1}$$
\begin{theorem}\label{t3}
Let $\Omega$ be an unbounded domain in $\mathbb R^3$ such that it
satisfies the property $(R)$ and with its boundary $\partial\Omega$ of
class $C^1$. Let $\phi$ a bounded weight function for $\Omega$ with
rate of exponential decay $\lambda$. Let $T$ be the operator defined
in (\ref{TCOM}) with coefficients $a_1$, $a_2$, $a_3\in
C^m(\overline{\Omega}, \mathbb{R})\cap L^\infty(\Omega)$ such that
$|\partial_{x_l}^t a_j(x)|^2\leq C \phi$ for some positive constant
$C$ and for all $t=1,\, \dots, \, m$ and $j,\,l=1,\, 2,\, 3$. We set 
$$
M:=K(m)\max_{\substack{l=1,2,3\\t=1,\dots,\,
    m}}\,\sup_{x\in\Omega}\left(\left| \partial_{x_l}^t a_l
  \right|^2\right)+K(m,\phi,\lambda) ,
$$
where $K(m)$ and $K(m,\phi,\lambda)$ are positive constants that depends on the order $m$ of the operator, on $\phi$ and on $\lambda$. We suppose that
\begin{equation}\label{kappaomeg}
 C_T:=\min_{l=1,2,3}\inf_{x\in\Omega} (a^2_l(x))>0, \ \ \ \
  \frac 12 C_T- M>0.
\end{equation}
Then:

(I) The boundary value Problem (\ref{ProbAA})
has a unique weak solution $u\in H^m_0(\Omega,\hh\otimes \cc)$, for $s\in\hh\setminus \{0\}$ with $\Re(s)=0$,
and
\begin{equation}\label{l2es1bis}
 \|u\|^2_{L^2}\leq \frac 1{|s|^2} \Re(b_s(u,u)).
\end{equation}

(II)
Moreover, we have the following estimate
\begin{equation}\label{l2es2}
 \|T(u)\|_{L^2}^2\leq C_1\Re(b_s(u,u)),
 \end{equation}
for every $u\in  H^m_0(\Omega,\hh)$, and $s\in\hh\setminus \{0\}$ with $\Re(s)=0$,
where
$$
C_1:=\frac{C_T-2M}{2C_T}.
$$
\end{theorem}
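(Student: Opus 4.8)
The plan is to mirror the proof of Theorem \ref{t3_b}. Since the continuity of $b_s(\cdot,\cdot)$ on $H^m_0(\Omega,\cc\otimes\hh)$ is already furnished by Proposition \ref{p1}, it suffices to establish the coercivity of $\Re\, b_{js_1}(\cdot,\cdot)$ for $s=js_1$ with $j\in\mathbb S$ and $s_1\in\rr\setminus\{0\}$, and then to invoke the Lax--Milgram lemma. First I would write $\Re\, b_{js_1}(u,u)$ explicitly exactly as in \eqref{e1bis}, isolating the two manifestly non-negative terms $s_1^2\|u\|_{L^2}^2$ and $\sum_{l=1}^3\|a_l\partial_{x_l}^m u\|_{L^2}^2$, and treating the three remaining summations of integrals as error terms to be absorbed.

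The essential difference from the bounded case lies in estimating these error terms, where the weighted Poincar\'e inequality of Theorem \ref{pun} replaces the ordinary Poincar\'e inequality. A typical summand pairs two coefficient-derivative factors $\partial_{x_l}^{t_1}(a_l)$, $\partial_{x_l}^{t_2+k}(a_l)$ (with $t_2+k\ge1$) against $\partial_{x_l}^m u$ and a lower-order derivative $\partial_{x_l}^{t_3}u$, $t_3\le m-1$. Following the scheme of Theorem \ref{t3_b}, I would use Young's inequality to separate either a good factor $\|a_l\partial_{x_l}^m u\|^2$ (when $t_1=0$, absorbed through a small $\epsilon$) or a factor $\sup_\Omega|\partial_{x_l}^{t_1}a_l|^2\,\|\partial_{x_l}^m u\|^2$ (when $t_1\ne0$, contributing the $K(m)\max\sup|\partial^t a_l|^2$ summand of $M$), from the complementary factor $\|\partial_{x_l}^{t_2+k}(a_l)\,\partial_{x_l}^{t_3}u\|^2$. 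It is here that the hypothesis $|\partial_{x_l}^t a_j|^2\le C\phi$ is decisive: this last factor is dominated by $C\int_\Omega\phi\,|\partial_{x_l}^{t_3}u|^2\,dx$, to which I would apply Theorem \ref{pun} iteratively $(m-t_3)$ times, first on $C_0^\infty(\Omega)$ functions, whose derivatives remain compactly supported, then extending by density of $C_0^\infty$ in $H^m_0$. Using the boundedness of $\phi$ together with the estimate $\sum_{|\beta|=m}\|\partial^\beta u\|\le K\|u\|_{D^m}$ from Remark \ref{r1b}, this bounds the factor by $K(m,\phi,\lambda)\|u\|_{D^m}^2$, the constant $K(m,\phi,\lambda)$ accounting for the accumulated weights $(2/\lambda)^{2(m-t_3)}$ and $\|\phi\|_\infty$. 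Collecting all contributions, choosing $\epsilon$ so that half of $\sum_l\|a_l\partial_{x_l}^m u\|^2$ survives, and using $\sum_l\|a_l\partial_{x_l}^m u\|^2\ge C_T\|u\|_{D^m}^2$, I expect to reach $\Re\, b_{js_1}(u,u)\ge s_1^2\|u\|_{L^2}^2+(\tfrac12 C_T-M)\|u\|_{D^m}^2$ with $M$ exactly as in the statement.

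The main obstacle, absent in the bounded setting, is that $\Re\, b_{js_1}$ now controls only $\|u\|_{L^2}^2$ and $\|u\|_{D^m}^2$, whereas Proposition \ref{p1} phrases continuity in the full $\|\cdot\|_{H^m}$ norm, and on an unbounded domain $\|\cdot\|_{D^m}$ alone is not a norm. To close this gap I would control the intermediate derivatives by a Gagliardo--Nirenberg interpolation: since every $u\in H^m_0(\Omega)$ extends by $0$ to an $H^m(\rr^3)$ function, the Fourier-transform argument of Remark \ref{r1b} gives $\|\partial^\beta u\|_{L^2}^2\le C(\|u\|_{L^2}^2+\|u\|_{D^m}^2)$ for every $1\le|\beta|\le m-1$, whence $\|u\|_{H^m}^2\le C(\|u\|_{L^2}^2+\|u\|_{D^m}^2)$. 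Combined with the coercivity estimate and with $s_1\ne0$ and $\tfrac12 C_T-M>0$, this yields $\Re\, b_{js_1}(u,u)\ge c\,\|u\|_{H^m}^2$ for some $c>0$, i.e. genuine coercivity in $H^m_0(\Omega,\cc\otimes\hh)$. The Lax--Milgram lemma then produces the unique weak solution in the sense of Definition \ref{wf}, proving part (I).

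Finally, the two $L^2$-estimates follow directly, as in Theorem \ref{t3_b}. For \eqref{l2es1bis} I would retain only the term $s_1^2\|u\|_{L^2}^2$ in the coercivity bound and use $|s|^2=s_1^2$. For \eqref{l2es2} I would instead keep $\tfrac12\sum_l\|a_l\partial_{x_l}^m u\|^2-M\|u\|_{D^m}^2$, insert $\|u\|_{D^m}^2\le\tfrac1{C_T}\sum_l\|a_l\partial_{x_l}^m u\|^2$ to reach $\tfrac{C_T-2M}{2C_T}\sum_l\|a_l\partial_{x_l}^m u\|^2$, and conclude with $\sum_l\|a_l\partial_{x_l}^m u\|^2\ge\|Tu\|_{L^2}^2$, giving $C_1=\tfrac{C_T-2M}{2C_T}$.
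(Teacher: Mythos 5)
Your proposal follows the paper's own proof essentially step for step: the same decomposition of $\Re\, b_{js_1}(u,u)$ into the two positive terms plus error summations, the same Young-inequality splitting ($\epsilon$-absorption when $t_1=0$, sup-norm contribution to $M$ when $t_1\neq 0$), the same iterated use of the weighted Poincar\'e inequality of Theorem \ref{pun} under the hypothesis $|\partial_{x_l}^t a_j|^2\leq C\phi$, then Lax--Milgram and the identical derivations of \eqref{l2es1bis} and \eqref{l2es2}. Your extra interpolation step $\|u\|_{H^m}^2\leq C(\|u\|_{L^2}^2+\|u\|_{D^m}^2)$, used to turn the bound $s_1^2\|u\|_{L^2}^2+(\tfrac12 C_T-M)\|u\|_{D^m}^2$ into genuine coercivity with respect to the $H^m$-norm (which is the norm in which Proposition \ref{p1} gives continuity, and in which Lax--Milgram must be applied since $\|\cdot\|_{D^m}$ is not a norm on unbounded domains), is a detail the paper asserts without proof, so it is a clarification of the same argument rather than a different route.
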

\begin{proof} In order to use the Lax-Milgram Lemma to prove the existence and the uniqueness of the solution for the
weak formulation of the problem, it is sufficient to prove the coercivity of the bilinear form $b_s(\cdot,\cdot)$
   in Definition \ref{b1bis}
since the continuity is proved in Proposition \ref{p1}. First we write explicitly ${\rm Re}\, b_{{\bf j}s_1}(u,u)$,
 where we have set $s={\bf j}s_1$, for $s_1\in \mathbb{R}$ and  ${\bf j}\in \mathbb{S}$:
	
\begin{equation}\label{e1}
\begin{split}
\Re\,  b_{{\bf j}s_1} & (u,u) =
s_1^2\|u\|^2_{L^2}+ \sum_{\ell=1}^3\| a_\ell\partial_{x_\ell}^m u\|^2_{L^2}
\\
&+\Re\Bigg(\sum_{l=1}^3 \sum_{|\bft'|=m\,\wedge\, t_2\leq m-1}\sum_{k=0}^{t_1}\begin{pmatrix}m\\\bft'\end{pmatrix}\begin{pmatrix}t_1\\k\end{pmatrix}\int_{\Omega} \langle\partial_{x_l}^{k}(a_l(x))\partial_{x_l}^{t_1-k}(a_l(x))\partial_{x_l}^{m}(u), \partial_{x_l}^{t_2}(u)\rangle\, dx \\
&+\sum_{l=1}^3 \sum_{k=1}^m (-1)^{k} \sum_{|\bft|=m-k} \begin{pmatrix}m\\k\end{pmatrix}\begin{pmatrix}m-k\\\bft\end{pmatrix}\int_{\Omega} \langle \partial_{x_l}^{t_1}(a_l(x))\partial_{x_l}^{t_2+k}(a_l(x))\partial_{x_l}^{m}(u), \partial_{x_l}^{t_3}(u)\rangle \, dx
\\
&+ \sum_{l<j} \sum_{k=1}^m (-1)^{k} \sum_{|\bft|=m-k} \begin{pmatrix}m\\k\end{pmatrix}\begin{pmatrix}m-k\\\bft\end{pmatrix}\left( \int_{\Omega} \langle e_le_j \partial_{x_l}^{t_1}(a_l(x))\partial_{x_l}^{t_2+k}(a_j(x))\partial_{x_j}^{m}(u), \partial_{x_l}^{t_3}u\rangle \, dx\right.
\\
&\left. -\int_{\Omega} \langle e_le_j \partial_{x_j}^{t_1}(a_j(x))\partial_{x_j}^{t_2+k}(a_l(x))\partial_{x_l}^{m}(u),\partial_{x_j}^{t_3}u\rangle\, dx\right)\Bigg).
\end{split}
\end{equation}
We see that the first two terms in \eqref{e1} are positive. The other
terms, that we have collected in four summations, can be estimated all
in the same way and for this reason we explain how to estimate only
the integrals in the first summation. By H\"older's inequality and
the repeated application of Theorem \ref{pun},  we have for $k=0$ or $k=t_1$ 
\[
\begin{split}
\left|\int_{\Omega} \langle a_l(x)\partial_{x_l}^{t_1}(a_l(x))\partial_{x_l}^{m}(u), \partial_{x_l}^{t_2}(u)\rangle\, dx\right| &\leq \epsilon \|a_l\partial_{x_l}^m u\|^2+\frac 1{\epsilon}\|\partial_{x_l}^{t_1}(a_l)\partial_{x_l}^{t_2}(u)\|^2\\
&\leq \epsilon \|a_l\partial_{x_l}^m u\|^2+C\frac 1{\epsilon}\|\phi^{\frac 12} \partial_{x_l}^{t_2}(u)\|^2\\
&\leq \epsilon \|a_l\partial_{x_l}^m u\|^2+C'\frac 1{\epsilon}\left(\frac 2\lambda\right)^{m-t_2}\sum_{|\beta|=m}\| \partial^{\beta}(u)\|^2\\
&\leq \epsilon \|a_l\partial_{x_l}^m u\|^2+C''\frac 1{\epsilon}\left(\frac 2\lambda\right)^{m-t_2}\| u\|^2_{D^m}
\end{split}
\]
where the constants $C$, $C'$ and $C''$ depend on $\phi$, the derivatives of $a_j$'s and $m$. In the case $0<k<t_1$, we have
\[
\begin{split}
\left|\int_{\Omega} \langle\partial_{x_l}^{k}(a_l(x))\partial_{x_l}^{t_1-k}(a_l(x))\partial_{x_l}^{m}(u), \partial_{x_l}^{t_2}(u)\rangle\, dx \right| &\leq \frac 12 \|\partial_{x_l}^k (a_l)\partial_{x_l}^m u\|^2+\frac 12\|\partial_{x_l}^{t_1-k}(a_l)\partial_{x_l}^{t_2}(u)\|^2\\
&\leq \frac 12 \sup_{x\in\Omega}\left(\left| \partial_{x_l}^k a_l
  \right|^2\right)\|\partial_{x_l}^m u\|^2+C''\frac 12\left(\frac
  2\lambda\right)^{2(m-t_2)}\| u\|^2_{D^m} .
\end{split}
\]
Summing up all the previous inequalities and choosing the constants $k(m)$ and $k(m,\phi,\lambda)$ in a suitable way, we obtain
\[
\begin{split}
&\left| \sum_{l=1}^3 \sum_{|\bft'|=m\,\wedge\, t_2\leq m-1}\sum_{k=0}^{t_1}\begin{pmatrix}m\\\bft'\end{pmatrix}\begin{pmatrix}t_1\\k\end{pmatrix}\int_{\Omega} \langle\partial_{x_l}^{k}(a_l(x))\partial_{x_l}^{t_1-k}(a_l(x))\partial_{x_l}^{m}(u), \partial_{x_l}^{t_2}(u)\rangle\, dx  \right|\\
&\leq \epsilon\sum_{l=1}^3\|a_l\partial_{x_l}^m u\| +
\left(K(m)\max_{\substack{j,\,l=1,2,3\\ t=1,\dots,\,
      m}}\,\sup_{x\in\Omega}\left(\left| \partial_{x_j}^t a_l
    \right|^2\right)+\frac {K(m,\phi,\lambda)}\epsilon\right)
\|u\|^2_{D^m} .
\end{split}
\]
We note that $K(m,\phi,\lambda)$ depends on $\lambda$ through the multiplicative constant $\max\left(\left(\frac 2\lambda\right)^{2m},\,\left(\frac 2\lambda\right)^2\right)$. Analogous estimates also holds for the other summation of integrals (here the role of $k$ is played by $t_1$ and thus we have to distinguish the cases of $t_1=0$ and $t_1\neq 0$) with possibly different constants $K(m)$ and $K(m,\phi)$. Summing up all the estimates and choosing in the right way $\epsilon$, we obtain
\begin{equation}\label{nova}
\begin{split}
\Re\, b_{{\bf j}s_1}  (u,u)  &\geq s_1^2\|u\|^2_{L^2} +\frac 12 \sum_{l=1}^3\|a_l \partial_{x_l}^m u\|-M\| u\|^2_{D^m}
\\
&\geq  s_1^2 \|u\|^2_{L^2}
 + \left(\frac 12 C_T-M\right)\| u\|^2_{D^m} .
\end{split}
\end{equation}
By the hypothesis \eqref{kappaomeg} we know that
\begin{equation}\nonumber
 \frac 12 C_T-M>0
\end{equation}
thus the quadratic form $b_{{\bf j}s_1}(\cdot,\cdot)$ is coercive for every $s_1\in \mathbb{R}$.
In particular we have
\begin{equation}\label{e5}
\Re\, b_{{\bf j}s_1}(u,u)\geq s_1^2\|u\|^2_{L^2}\quad\textrm{and}\quad  \Re\, b_{{\bf j}s_1}(u,u)\geq\left(C_T-M\right)\| u\|^2_{D^m}.
\end{equation}
By the Lax-Milgram Lemma, we have that for any $w\in L^2(\Omega, \cc\otimes\mathbb H)$ there exists $u_w\in  H^m_0(\Omega,\cc\otimes\hh)$,
for $s_1\in \mathbb{R}\setminus \{0\}$ and  ${\bf j}\in \mathbb{S}$, such that
$$ b_{{\bf j}s_1}(u_w,v)= \langle w, v \rangle_{L^2},\ \ \ {\rm for \ all}\ \ v\in  H^m_0(\Omega,\cc\otimes\hh). $$
What remains to prove is the inequality \eqref{l2es2}. Applying the second of the inequalities in \eqref{nova} and observing that
$$ \|u\|_{D^m}=\sum_{\ell=1}^3\|\partial_{x_\ell}^mu\|_{L^2}^2\leq \frac{1}{C_T}\sum_{\ell=1}^3\|a_\ell\partial_{x_\ell}^m u\|_{L^2}^2 $$
we have:
\[
\begin{split}
\Re\,  b_{js_1}  (u,u) &\geq \frac 12 \sum_{l=1}^3\|a_l \partial_{x_l}^m u\|-M\| u\|^2_{D^m}
\\
&
\geq \frac 12 \sum_{\ell=1}^3\| a_\ell\partial_{x_\ell}^mu\|^2_{L^2}-\frac{M}{C_T}\sum_{\ell=1}^3\| a_\ell\partial_{x_\ell}^m u\|^2_{L^2}
\\
&
\geq \frac {C_T-2M}{2C_T}\sum_{\ell=1}^3\| a_\ell\partial_{x_\ell}^m u\|^2_{L^2}\geq C_1\|Tu\|_{L^2}^2,
\end{split}
\]
where we have set
$$
C_1:=\frac{C_T-2M}{2C_T}
$$
and this concludes the proof.
\end{proof}
\begin{remark}
We provide here two examples of domains and coefficients $a_l$'s that satisfy the hypothesis of Theorem \ref{t3}.
\begin{itemize}
\item Let $\Omega:=\{x\in\rr^3:\, |x-P|>M \}$ and $\phi(x):=\frac{e^{-\lambda|x-P|}}{|x-P|^2}$, we define $a_l(x):= K_l+e^{-\lambda |x-P|}s_l(x)$ where $s_l\in\mathcal S(\rr^3)$ and $K_l$ is a positive constant large enough for $l=1,\, 2,\,3$  ($\mathcal S(\rr^3)$ is the space of the Schwartz functions). By the properties of the Schwartz functions we have that
$$ \sup_{x\in\Omega}\left(\left|\frac{\partial^\bfb (a_l(x))}{\phi(x)}\right|\right)\leq C_{\bfb,s_l},\quad\forall \, \bfb\in\mathbb N^3\quad\textrm{and}\quad |\bfb|>0.$$
\item Let $\Omega:=\{x\in\rr^3:\, \langle x-P, v\rangle>0 \}$ and $\phi(x):=e^{-\lambda \langle x-P, v\rangle}$, we define $a_l(x):= K+e^{-\lambda \langle x-P, v\rangle}$ where $K$ is a positive constant large enough.
\end{itemize}
\end{remark}
\section{\bf The estimates for the $S$-resolvent operators and the fractional powers of $T$}\label{PRB2}
Using the estimates in Theorem \ref{t3} we can now show that the $S$-resolvent operator of $T$ decays fast enough along the set of purely imaginary quaternions.
\begin{theorem}
Under the hypotheses of Theorem \ref{t3_b} or of Theorem \ref{t3}, the operator $\mathcal Q_s(T)$
 is invertible for any
 $s={\bf j}s_1$, for $s_1\in \mathbb{R}\setminus \{0\}$ and  ${\bf j}\in \mathbb{S}$
  and the following estimate
\begin{equation}\label{ei2}
\|\mathcal Q_s(T)^{-1}\|_{\mathcal B(L^2)}\leq \frac {1}{s_1^2}
\end{equation}
 holds.
Moreover, the $\mathcal S$-resolvent operators satisfy the estimates
\begin{equation}\label{new1}
\|\mathcal S^{-1}_L(s, \,T)\|_{\mathcal B (L^2)}\leq \frac{\Theta}{|s|}\quad\textrm{and}\quad \|\mathcal S^{-1}_R(s, \,T)\|_{\mathcal B (L^2)}\leq \frac{\Theta}{|s|},
\end{equation}
for any  $s={\bf j}s_1$, for $s_1\in \mathbb{R}\setminus \{0\}$ and  ${\bf j}\in \mathbb{S}$, with a constant $\Theta$ that does not depend on $s$.
\end{theorem}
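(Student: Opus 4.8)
The plan is to derive both conclusions directly from the variational estimates of Theorem~\ref{t3_b} (bounded $\Omega$) and Theorem~\ref{t3} (unbounded $\Omega$), so that only a careful bookkeeping of constants is required. Fix $s={\bf j}s_1$ with $s_1\in\rr\setminus\{0\}$ and ${\bf j}\in\SS$, and note $|s|=|s_1|$. Since $b_s(\cdot,\cdot)$ is continuous and coercive on $H^m_0(\Omega,\cc\otimes\hh)$, the Lax-Milgram Lemma provides, for every $w\in L^2$, a unique $u=u_w\in H^m_0$ with $b_s(u,v)=\langle w,v\rangle_{L^2}$ for all $v\in H^m_0$; this is exactly the invertibility of $\mathcal Q_s(T)$, and I set $\mathcal Q_s(T)^{-1}w:=u_w$. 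To prove \eqref{ei2} I would test the weak identity against $v=u$, so that $b_s(u,u)=\langle w,u\rangle_{L^2}$, and then pass to real parts: using $\Re(q)\le|q|$ and the Cauchy-Schwarz inequality gives $\Re\,b_s(u,u)\le\|w\|_{L^2}\|u\|_{L^2}$. Inserting the coercivity bound $\Re\,b_s(u,u)\ge s_1^2\|u\|_{L^2}^2$ from \eqref{e5_b} (resp.\ \eqref{e5}) and dividing by $\|u\|_{L^2}$ yields $\|u\|_{L^2}\le s_1^{-2}\|w\|_{L^2}$, i.e.\ \eqref{ei2}.

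For the resolvent estimates \eqref{new1} I would expand the definitions \eqref{SRESL} and \eqref{SRESR} and apply the triangle inequality. Writing $u=\mathcal Q_s(T)^{-1}w$, the left resolvent \eqref{SRESL} is bounded by $\|\mathcal Q_s(T)^{-1}\overline s\,w\|_{L^2}+\|Tu\|_{L^2}$; since multiplication by $\overline s$ scales the norm by $|s|$, the first term is at most $|s|\,s_1^{-2}\|w\|_{L^2}=|s|^{-1}\|w\|_{L^2}$ by \eqref{ei2} and $|s|=|s_1|$. For the second term I would reuse the chain $\Re\,b_s(u,u)\le\|w\|_{L^2}\|u\|_{L^2}\le s_1^{-2}\|w\|_{L^2}^2$ from the first paragraph and then invoke part~(II), namely \eqref{l2es2_b} (resp.\ \eqref{l2es2}), giving
\[
\|Tu\|_{L^2}^2\le C_1\,\Re\,b_s(u,u)\le C_1\,s_1^{-2}\|w\|_{L^2}^2,
\]
hence $\|Tu\|_{L^2}\le\sqrt{C_1}\,|s|^{-1}\|w\|_{L^2}$. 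Summing the two contributions yields $\|S_L^{-1}(s,T)w\|_{L^2}\le(1+\sqrt{C_1})\,|s|^{-1}\|w\|_{L^2}$, so \eqref{new1} holds with $\Theta:=1+\sqrt{C_1}$, which does not depend on $s$. Rewriting \eqref{SRESR} as $-T\mathcal Q_s(T)^{-1}+\overline s\,\mathcal Q_s(T)^{-1}$, the right resolvent is bounded by the identical two terms and the same $\Theta$.

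The computation is essentially bookkeeping once Theorems~\ref{t3_b} and~\ref{t3} are in hand, so the genuine content was already spent in proving coercivity, and I do not expect a serious analytic obstacle here. The two points that do deserve care are quaternionic in nature. First, since $L^2(\Omega,\cc\otimes\hh)$ is only right linear, I must confirm that multiplication by the constant $\overline s$ scales the norm by exactly $|s|$; this rests on $\langle vq,vq\rangle=\overline q\,\langle v,v\rangle\,q=|q|^2\langle v,v\rangle$ and its left-multiplication analogue, and it is the only place where the $\cc\otimes\hh$-structure enters the norm estimate. Second, I would make precise the identification of the solution map $w\mapsto u_w$ with the inverse of $\mathcal Q_s(T)$: injectivity is immediate from uniqueness, and the resolvent bounds only require $u_w\in H^m_0\subset\mathcal D(T)$ so that $Tu_w$ is defined, whereas asserting that $u_w$ lies in $\mathcal D(T^2)$ and solves $\mathcal Q_s(T)u_w=w$ strongly would need elliptic regularity. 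I expect this identification, rather than the norm arithmetic, to be the one step requiring attention, and the coercivity and continuity already established are precisely what support it.
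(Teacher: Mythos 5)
Your proposal is correct and follows essentially the same route as the paper: Lax--Milgram together with the coercivity bound \eqref{e5_b} (resp.\ \eqref{e5}) gives \eqref{ei2}, and the bound on $T\mathcal Q_s(T)^{-1}$ coming from part (II) of Theorem \ref{t3_b} (resp.\ Theorem \ref{t3}), combined with the triangle inequality applied to \eqref{SRESL} and \eqref{SRESR}, gives \eqref{new1}. The only discrepancy is your constant $\Theta=1+\sqrt{C_1}$ versus the paper's $\Theta=2\max\{1,1/\sqrt{C_1}\}$: you quoted \eqref{l2es2_b} literally as stated, $\|Tu\|_{L^2}^2\le C_1\,\Re\,b_s(u,u)$, whereas the proofs of Theorems \ref{t3_b} and \ref{t3} actually establish $C_1\|Tu\|_{L^2}^2\le \Re\,b_s(u,u)$ and the paper uses that form here; since either version yields a $\Theta$ independent of $s$, this internal inconsistency of the paper does not affect the validity of your argument.
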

\begin{proof}
We saw in Theorem \ref{t3_b} or in Theorem \ref{t3} that for all $w\in L^2(\Omega,\cc\otimes\hh)$ there exists  $u_w\in  H^m_0(\Omega,\cc\otimes\mathbb H)$), for $s_1\in \mathbb{R}\setminus \{0\}$ and  ${\bf j}\in \mathbb{S}$, such that
$$
 b_{{\bf j}s_1}(u_w,v)= \langle w, v \rangle_{L^2},\quad {\rm for \ all}\ \ v\in H^m_0(\Omega, \cc\otimes\mathbb H).
 $$
Thus, we can define the inverse operator $\mathcal Q_{{\bf j}s_1}(T)^{-1}(w):=u_w$ for any $w\in L^2(\Omega, \cc\otimes\mathbb H)$ (we note that the range of $\mathcal Q_{{\bf j}s_1}(T)^{-1}$ is in $H^m_0(\Omega,\cc\otimes\mathbb H)$). The first inequality \eqref{e5_b} (or the first inequality in \eqref{e5} in the case $\Omega$ unbounded), applied to $u:=\mathcal Q_{{\bf j}s_1}(T)^{-1}(w)$, implies:
\begin{equation}
\begin{split}
s_1^2\|\mathcal Q_{{\bf j}s_1}(T)^{-1}(w)\|^2_{L^2} &  \leq  {\rm Re}\, b_{{\bf j}s_1}(Q_{js_1}(T)^{-1}(w),Q_{{\bf j}s_1}(T)^{-1}(w))
\\
&
\leq |b_{{\bf j}s_1}(Q_{{\bf j}s_1}(T)^{-1}(w),Q_{{\bf j}s_1}(T)^{-1}(w))|\\
& \leq |\langle w, Q_{{\bf j}s_1}(T)^{-1}(w)\rangle_{L^2}|
\\
&
\leq \|w\|_{L^2}\|Q_{{\bf j}s_1}(T)^{-1}(w)\|_{L^2}, ,
\end{split}
\end{equation}
for any $w\in L^2(\Omega, \cc\otimes\mathbb H)$. Thus,  we have
$$
 \|\mathcal Q_{{\bf j}s_1}(T)^{-1}\|_{\mathcal B(L^2)}\leq \frac {1}{s_1^2},
  \ \ \ {\rm for} \  s_1\in \mathbb{R}\setminus \{0\}\ \ {\rm and}  \ \ {\bf j}\in \mathbb{S}.
$$
The estimates \eqref{new1} follow from the estimate \eqref{l2es2_b}
(or the estimate \eqref{l2es2} in the case of $\Omega$
unbounded). Indeed, we have
\begin{equation}\nonumber
\begin{split}
 C_1\|Tu_w\|^2& \leq   {\rm Re\,} (b_{{\bf j}s_1}(u_w,u_w))
 \\
 &
 \leq |b_{{\bf j}s_1}(u_w,u_w)|
 \\
 &
 \leq |\langle w,u_w \rangle_{L^2}|
 \\
 &\leq \|w\|_{L^2}\|u_w\|_{L^2}
 \\
 &
 \overset{\eqref{ei2}}{\leq} \frac {1}{s_1^2}\|w\|^2_{L^2},
 \end{split}
 \end{equation}
  for $s_1\in \mathbb{R}\setminus \{0\}$  and $j\in \mathbb{S}$.
This estimate implies
\[
\left\| T\Q_{{\bf j}s_1}(T)^{-1}w\right\|_{L^2} = \|Tu_{w}\|_{L^2}
\leq \frac{1}{\sqrt{C_1}|s_1|} \|w\|_{L^2} ,
\]
so that we obtain
\begin{equation}\label{new4}
\left\| T \Q_{{\bf j}s_1}(T)^{-1}\right\|_{\mathcal B(L^2)} \leq \frac{1}{\sqrt{C_1}|s_1|}.
\end{equation}
In conclusion, if we set
\[
\Theta := 2\max\left\{1,\frac{1}{\sqrt{C_1}}\right\},
\]
estimates \eqref{new4} and \eqref{ei2} yield
\begin{equation}\label{eq:S_R_estimate}
\begin{split}
\left\|S_R^{-1}(s,T)\right\|_{\mathcal B(L^2)} &= \left\|(T - \overline{s}\id)\Q_{s}(T)^{-1}\right\|_{\mathcal B(L^2)} \\
&\leq \left\|T\Q_{s}(T)^{-1}\right\|_{\mathcal B(L^2)} + \left\|\overline{s}\Q_{s}(T)^{-1}\right\|_{\mathcal B(L^2)} \leq\frac{ \Theta}{|s_1|}
\end{split}
\end{equation}
and
\begin{equation*}
\begin{split}
\left\|S_L^{-1}(s,T)\right\|_{\mathcal B(L^2)} &= \left\|T\Q_{s}(T)^{-1} - \Q_{s}(T)^{-1}\overline{s}\right\|_{\mathcal B(L^2)} \\
&\leq \left\|T\Q_{s}(T)^{-1}\right\|_{\mathcal B(L^2)} + \left\| \Q_{s}(T)^{-1}\overline{s}\right\|_{\mathcal B(L^2)} \leq\frac{ \Theta}{|s_1|},
\end{split}
\end{equation*}
for any $s = {\bf j} s_1\in\hh\setminus\{0\}$.
\end{proof}

Thanks to the above results, we are now ready to establish our main statement.
\begin{theorem}\label{thm:main}
Under the hypotheses of Theorem \ref{t3_b} or of Theorem \ref{t3}, for any $\alpha\in(0,1)$ and $v\in\dom(T)$, the integral
	\[
	P_{\alpha}(T)v := \frac{1}{2\pi}\int_{-\uI\rr} s^{\alpha-1}\,ds_{\uI}\,S_{R}^{-1}(s,T) Tv
	\]
	converges absolutely in $L^2$.
\end{theorem}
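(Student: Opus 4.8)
The plan is to reduce the claim to the finiteness of a scalar integral and then bound the integrand by two complementary estimates, one effective for large $|s|$ and one for small $|s|$. Since left multiplication by a quaternion $q$ satisfies $|qu| = |q|\,|u|$ pointwise on $\cc\otimes\hh$, we have $\|s^{\alpha-1}\,w\|_{L^2} = |s|^{\alpha-1}\|w\|_{L^2}$ for every $w\in L^2$; parametrizing the contour $-{\bf j}\rr$ as $s = {\bf j}s_1$ with ${\bf j}\in\SS$ and $s_1\in\rr$ gives $|s| = |s_1|$ and $|ds_{\bf j}| = |ds_1|$. Hence absolute convergence in $L^2$ is equivalent to
$$\int_{-\infty}^{+\infty} |s_1|^{\alpha-1}\,\bigl\|S_R^{-1}({\bf j}s_1,T)Tv\bigr\|_{L^2}\,|ds_1| < +\infty,$$
and I would split this integral into the regions $|s_1| > 1$ and $|s_1|\le 1$.

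For the region $|s_1| > 1$, I would invoke the decay of the $S$-resolvent established in the previous theorem. By \eqref{new1} we have $\|S_R^{-1}(s,T)\|_{\mathcal B(L^2)} \le \Theta/|s|$, so $\|S_R^{-1}({\bf j}s_1,T)Tv\|_{L^2} \le (\Theta/|s_1|)\,\|Tv\|_{L^2}$ and the integrand is bounded by $\Theta\,\|Tv\|_{L^2}\,|s_1|^{\alpha-2}$. Since $\alpha < 1$ one has $\int_{|s_1|>1}|s_1|^{\alpha-2}\,|ds_1| < +\infty$, so this part is finite; here the hypothesis $v\in\dom(T)$ is used only to guarantee $Tv\in L^2$.

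The small-$|s_1|$ region is the main obstacle: the bound just used behaves like $|s_1|^{\alpha-2}$, which is not integrable near the origin, so the plain resolvent estimate is insufficient and the factor $Tv$ must be exploited. The key step is an algebraic identity, valid on $-{\bf j}\rr$ where $\Re(s)=0$ so that $\Q_s(T) = T^2 + s_1^2\id$ and $\bar s = -s$. Since $\Q_s(T)$ is a polynomial in $T$ with real coefficients, $T$ commutes with $\Q_s(T)^{-1}$ on $\dom(T)$; using this together with $T^2 = \Q_s(T) - s_1^2\id$, I would rewrite, for $v\in\dom(T)$,
$$S_R^{-1}(s,T)Tv = -(T-\bar s\id)\Q_s(T)^{-1}Tv = -(T^2 - \bar s T)\Q_s(T)^{-1}v = -v + s_1^2\,\Q_s(T)^{-1}v + \bar s\,T\Q_s(T)^{-1}v.$$
Each term is now controlled uniformly in $s_1$: the first by $\|v\|_{L^2}$; the second by \eqref{ei2}, giving $s_1^2\,\|\Q_s(T)^{-1}v\|_{L^2}\le \|v\|_{L^2}$; and the third by \eqref{new4}, giving $|s_1|\,\|T\Q_s(T)^{-1}v\|_{L^2}\le C_1^{-1/2}\|v\|_{L^2}$. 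Therefore $\|S_R^{-1}({\bf j}s_1,T)Tv\|_{L^2} \le (2 + C_1^{-1/2})\|v\|_{L^2}$ for all $s_1 \ne 0$, so on $|s_1|\le 1$ the integrand is at most $(2+C_1^{-1/2})\|v\|_{L^2}\,|s_1|^{\alpha-1}$, and $\int_{|s_1|\le 1}|s_1|^{\alpha-1}\,|ds_1| < +\infty$ because $\alpha > 0$.

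Combining the two regions yields the finiteness of the scalar integral and hence absolute convergence in $L^2$. The delicate point to write out carefully is the commutation $\Q_s(T)^{-1}Tv = T\Q_s(T)^{-1}v$ for $v\in\dom(T)$, together with the domain bookkeeping needed to make the displayed identity rigorous (this is precisely where $v\in\dom(T)$ enters essentially, supplying the uniform bound near the origin that the bare resolvent estimate cannot provide); once the identity is in place, the convergence estimates are routine.
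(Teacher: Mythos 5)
Your proof is correct and follows essentially the same route as the paper: split the contour at $|s_1|=1$, use the decay estimate \eqref{new1} together with $\|Tv\|_{L^2}$ for $|s_1|>1$, and obtain a bound uniform in $s_1$ near the origin from an algebraic identity for $S_R^{-1}(s,T)Tv$, which makes the integrand $O(|s_1|^{\alpha-1})$ there. The only difference is cosmetic: the paper cites the right $S$-resolvent equation $S_R^{-1}(s,T)Tv = sS_R^{-1}(s,T)v - v$ and bounds it by $(\Theta+1)\|v\|_{L^2}$ via \eqref{new1}, whereas you re-derive that same identity (restricted to $\Re(s)=0$, where $\bar s=-s$) from the definition of $S_R^{-1}$ via the commutation of $T$ with $\Q_s(T)^{-1}$ and bound the three resulting terms with \eqref{ei2} and \eqref{new4}.
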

\begin{proof}
	The right $S$-resolvent equation implies
	\[
	S_{R}^{-1}(s,T)Tv = sS_{R}^{-1}(s,T)v - v,\qquad \forall v\in\dom(T)
	\]
	and so
	\begin{align*}
	\frac{1}{2\pi}\int_{-\uI\rr} \left\|s^{\alpha-1}\,ds_{\uI}\,S_{R}^{-1}(s,T) Tv\right\|_{L^2}
	&\leq \frac{1}{2\pi}\int_{-\infty}^{-1} |t|^{\alpha-1} \left\| S_{R}^{-1}(-\uI t,T) \right\|_{\mathcal B(L^2)} \left\|Tv\right\|_{L^2} \,dt\\
	& \quad
	+\frac{1}{2\pi}\int_{-1}^{1} |t|^{\alpha-1} \left\| (-\uI t) S_{R}^{-1}(-\uI t,T)v - v\right\|_{L^2} \,dt\\
	& \quad
	+ \frac{1}{2\pi}\int_{1}^{+\infty} t^{\alpha-1} \left\| S_{R}^{-1}(\uI t,T) \right\|_{\mathcal{B}(L^2)} \left\|Tv\right\|_{L^2}\,dt.
	\end{align*}
	As $\alpha\in(0,1)$, the estimate \eqref{new1} now yields
	\begin{align*}
	\frac{1}{2\pi}\int_{-\uI\rr} &\left\|s^{\alpha-1}\,ds_{\uI}\,S_{R}^{-1}(s,T) Tv\right\|_{L^2}
    \\
	&\leq \frac{1}{2\pi}\int_{1}^{+\infty} t^{\alpha-1}\frac{\Theta}{t}\left\|Tv\right\|_{L^2} \,dt +
	\frac{1}{2\pi}\int_{-1}^{1} |t|^{\alpha-1} \left(|t| \frac{\Theta}{|t|} + 1\right)\|v\|_{L^2} \,dt\\
	& \quad
	+\frac{1}{2\pi}\int_{1}^{+\infty} t^{\alpha-1} \frac{\Theta}{t} \left\|Tv\right\|_{L^2}\,dt \\
	&< +\infty.
	\end{align*}
\end{proof}

\section{Concluding remarks}
We list in the following some reference associated with
 the spectral theory on the $S$-spectrum and some research directions
 in order to orientate the interested
reader in this field. Moreover, we some give references for  classical fractional problems for scalar operators.

(I) In the literature there are several non linear models that involve the fractional Laplacian and even the fractional powers
of more general elliptic operators, see for example, the books
\cite{BocurValdinoci,Vazquez}.

(II)
The $S$-spectrum approach to fractional diffusion problems used in this paper is a generalization of the
method developed by   Balakrishnan, see \cite{Balakrishnan}, to
define the fractional  powers of a real operator $A$.
In the paper \cite{64FRAC}
following the book of M. Haase, see
\cite{Haase}, has been developed
the theory on fractional powers of quaternionic linear operators, see also \cite{Hinfty,FJTAMS}.

(III) The spectral theorem on the $S$-spectrum is also an other tool
to define the fractional powers of vector operators, see \cite{ack} and for perturbation results see \cite{CCKSpert}.

(IV) An historical note on the discovery of the $S$-resolvent operators and of the $S$-spectrum
can be found in the introduction of the book \cite{CGKBOOK}.

The most important results in quaternionic operators theory based on the $S$-spectrum
and the associated theory of slice hyperholomorphic functions
are contained in the books \cite{COF,ACSBOOK,FJBOOK,CGKBOOK,ACSBOOK2,MR2752913,BOOKGS,GSSb}, for the case on $n$-tuples of operators see
\cite{JFACSS}.

(V) Our future research directions will consider the development of ideas from one and several
complex variables, such as in \cite{BKP,BKPZ,BPZ,BDP,HPR,MPR,MP2}  to the quaternionic setting.

\medskip
{\em Acknowledgements}.
The second author is partially supported by the PRIN project Direct and inverse problems for partial differential equations: theoretical aspects and applications.

\medskip
{\em Data availability}.
There are no data associate with this research.

\medskip
{\em Competing interests declaration}.
The authors declare none.

\end{document}